\newcounter{theorems}
\numberwithin{theorems}{section}
\newtheorem{thm}[theorems]{Theorem}
\newtheorem{lem}[theorems]{Lemma}
\newtheorem{defin}[theorems]{Definition}
\newtheorem{assu}[theorems]{Assumption}
\newtheorem*{thm*}{Theorem}
\newtheorem*{rem*}{Remark}
\newtheorem*{que*}{Question}
\newtheorem*{corB}{Corollary B}
\newtheorem*{Theorem A}{Theorem A}
\def\om {\omega }
\def\oms {\omega_{stat} }
\def\As {A_{stat} }
\def\RR {\mathbb{R}}
\def\eps {\varepsilon}
\def\sub {\subset}
\def\be {\begin{equation}}
\def\ee {\end{equation}}
\def\bp {\begin{proof}}
\def\ep {\end{proof}}
\def\ss {S^1}
\def\ms {P^r_{MS}}
\def\m {^{-1}}
\def\A {A_M}
\def\nw {\Omega}
\def\wu {W^u}
\def\ws {W^s}
\def\cu {\overline{W^u}}
\def\cs {\overline{W^s}}
\def\P {P^{r}}
\def\leb {Leb}
\DeclareMathOperator{\dist}{dist}
\DeclareMathOperator{\Diff}{Diff}
\DeclareMathOperator{\per}{per}
\def \arr {\leadsto}
\title{Milnor Attractors of Skew Products with the Fiber
a Circle\footnote{The 
final publication is available at Springer via \url{http://dx.doi.org/10.1007/s10883-016-9334-7}}}
\author{A. Okunev\footnote{National Research University Higher School of Economics; supported by part by a grant of the Simons Foundation;}}
\begin{document}
\maketitle
\begin{abstract}
We prove that for a generic skew product with circle fiber over an Anosov diffeomorphism the Milnor attractor (also called the likely limit set) coincides with the statistical attractor, is Lyapunov stable, and either has zero Lebesgue measure or coincides with the whole phase space. 
As a consequence we conclude that such skew product is either transitive or has non-wandering set of zero measure.
The result is proved under the assumption that the fiber maps preserve the orientation of the circle, and the skew product is partially hyperbolic.
\end{abstract}

\section{Introduction}
This paper is motivated by the following open questions by Yu.S. Ilyashenko:
\begin{itemize}
\item Is there an open set of diffeomorphisms with Lyapunov unstable attractor?~(\cite{ISh})

\item Is there an open set of diffeomorphisms with thick (i.e. having positive, but not full, Lebesgue measure) attractor?
\end{itemize}
The word ``attractor'' here is usually understood as ``Milnor attractor'' (see definition~\ref{d:AM}), but there are many other nonequivalent definitions of attractors.

A breakthrough in the study of
the first question was recently done by I. Shilin (\cite{Shi}, in preparation). He established local topological genericity of diffeomorphisms with Lyapunov unstable Milnor attractor.

A positive answer to the second question was obtained by Yu. Ilyashenko \cite{Ily} for boundary preserving  diffeomorphisms of manifolds with boundary.

In this paper, the questions stated above are studied for a particular class of diffeomorphisms of closed manifolds, namely, for partially hyperbolic skew products whose central fibers are circles.

Skew products with one-dimensional fibers form an important class of dynamical systems. On one hand, this class has numerous interesting properties: attractors with intermingled basins~\cite{Kan}, bony~\cite{Kud},~\cite{Kud2} and thick~\cite{Ily} attractors and so on. On the other hand, this class is relatively simple.

Non boundary preserving skew products with the fiber a segment were studied by V. Kleptsyn and D. Volk~\cite{KV},~\cite{KV2}. For such skew products there is a finite collection of attracting and repelling invariant sets, each of them is a so-called bony graph.  A bony graph is almost a graph of a function from the base to the fiber, but some points in the base correspond not to a single point, but to an interval in the fiber. The attractor has zero measure. Lyapunov stability of the attractor remained an open question.

In this paper we give a negative answer to both questions we started with for orientation-preserving skew products with circle fiber. We prove that typically the Milnor attractor is Lyapunov stable and not thick, and coincides with the statistical attractor (see definition~\ref{d:As}). As an easy corollary we get that either the skew product is transitive or the nonwandering set has zero measure. Lyapunov stability of the attractor also applies to the interval fiber case, since any skew product with interval fiber can be continued to a skew product with circle fiber. However, it is unknown whether the attractor is asymptotically stable even for the interval fiber.

Main ingredients of the proof are
\begin{itemize}
\item using the semicontinuity lemma to get Lyapunov stability (as in~\cite{MP})
\item
the fact that the statistical $\omega$-limit set of a generic point is saturated by unstable leaves (\cite{BDV}).
\end{itemize}

The results of this paper also hold for step skew products, see~\cite{OSh} (in preparation).

\section{Definitions and results} \label{intro}
\begin{defin}[\cite{Mil}, p.180] \label{d:AM}
For a diffeomorphism $F$ of a riemannian manifold $X$, the \emph{Milnor attractor} (it is also called the \emph{likely limit set}) of $F$ (notation: $A_M(F)$) is the smallest (with respect to inclusion) closed subset of $X$ containing $\omega$-limit sets of Lebesgue-almost all points.
\end{defin}

\begin{defin}
We will call the \emph{statisitical $\omega$-limit set} of point $x$ (notation: $\oms(x)$) the union of all points $y \in X$, such that for any neighborhood $U$ of $y$
\[
\limsup_{N \to +\infty} \frac 1 N |\{n: F^n(x) \in U, 0 \le n < N \}| > 0.
\]
\end{defin}

\begin{defin}[\cite{AAISh}, $\S8.2$; see also~\cite{GI}] \label{d:As}
The \emph{statistical attractor} (notation: $\As(F)$) is defined exactly like the Milnor attractor, but $\omega$-limit set in the definition is replaced by statistical $\omega$-limit set.
\end{defin}

\begin{rem*}
The existence of Milnor attractor is proved in~\cite[Lemma~$1$]{Mil}. The existence of statistical attractor can be proved in the same way. The definition of statistical attractor in~\cite{AAISh} is a little different from stated above, but it is easy to see that these definitions are equivalent.
\end{rem*}

\begin{defin}
A subset $Y \sub X$ is \emph{Lyapunov stable} for a map $F: X \to X$ if for every neighborhood $U$
of $Y$, there is a neighborhood $V \subset U$ of $Y$ such that $F^n(V) \sub U$ for any $n \ge 0$.
\end{defin}

Let $B$ be a compact riemannian manifold. Fix a transitive Anosov diffeomorphism $A: B \to B$. Consider a product $X = B \times \ss$ (we will call $B$ \emph{the base} and $\ss$ \emph{the fiber}) and a skew product diffeomorphism
\[
F: X \to X \quad (x, y) \mapsto (A(x), f_x(y)).
\]
The map $f_x$ is called \emph{the fiber map} above $x$.

For $r \ge 2$ let $\P$ be the set of all such skew products that are $C^r$-smooth, partially hyperbolic in the narrow sense (see the definition in~\cite[\S2.2]{Pes}, narrow sense means that the invariant splitting is of type $E^u \oplus E^c \oplus E^s$) with central direction tangent to the skew product fibers, and all fiber maps $f_x(y)$ preserve the orientation of $\ss$. Endow $\P$ with metric
\[
\dist(F, G) = \dist_{C^r(X)}(F, G) + \dist_{C^r(X)}(F\m, G\m).
\]

\begin{Theorem A}
There is a residual subset $R \subset \P$ such that for all $F \in R$ the statistical attractor of $F$
\begin{itemize}
\item coincides with the Milnor attractor
\item is Lyapunov stable
\item either has zero Lebesgue measure or coincides with the whole phase space $X$.
\end{itemize}
\end{Theorem A}
\begin{corB}
For any $F \in R$ either the non-wandering set of $F$ has zero Lebesgue measure or $F$ is transitive and a generic point with respect to the Lebesgue measure has a dense orbit.
\end{corB}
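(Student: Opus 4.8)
\noindent\emph{Plan of proof.} Fix $F\in R$ and write $\mathcal A=A_M(F)$ and $\nw(F)$ for the non-wandering set. By Theorem~A, $\mathcal A=\As(F)$, the set $\mathcal A$ is Lyapunov stable, and either $\leb(\mathcal A)=0$ or $\mathcal A=X$. Since every $\omega$-limit set consists of non-wandering points and $\nw(F)$ is closed, $\mathcal A\subseteq\nw(F)$ in all cases. It therefore suffices to prove the two implications: (i) if $\mathcal A=X$ then $F$ is transitive and $\leb$-almost every orbit is dense; and (ii) if $\leb(\mathcal A)=0$ then $\leb(\nw(F))=0$.

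\smallskip

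For (i) I would argue as follows. Since $\oms(x)\subseteq\omega(x)$ for every $x$, and a point with dense forward orbit has $\omega$-limit set equal to $X$, it is enough to show that $\oms(x)=X$ for $\leb$-almost every $x$. The set $\oms(x)$ is closed, $F$-invariant, and depends measurably on $x$, and --- this is the ingredient from \cite{BDV} used in the proof of Theorem~A --- for a.e.\ $x$ it is saturated by unstable leaves of $F$. On the other hand $\As(F)=X$ means precisely that for every ball $B$ of a fixed countable base the $F$-invariant set $E_B=\{x:\oms(x)\cap B=\emptyset\}$ is not of full Lebesgue measure. To upgrade this to $\leb(E_B)=0$ --- which gives that a.e.\ $\oms(x)$ meets every such $B$, hence is dense, hence (being closed) equals $X$ --- one runs a Hopf-type argument: the base $A$ is transitive Anosov, so its unstable foliation, and with it the unstable foliation of $F$, is minimal and, since $r\ge 2$, absolutely continuous, and for a generic skew product of this kind the stable and unstable holonomies together span (accessibility); these are exactly the properties that let one conclude that a measurable, $F$-invariant, essentially $u$-saturated set which is not null must be all of $X$ up to measure zero.

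\smallskip

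For (ii), note first that $\nw(F)=\nw(F^{-1})$ and that $F^{-1}$ again belongs to $\P$: it is a $C^r$ partially hyperbolic skew product over the transitive Anosov diffeomorphism $A^{-1}$, with the stable and unstable bundles interchanged and with orientation-preserving fibre maps. Since the metric on $\P$ is symmetric under $F\mapsto F^{-1}$, we may assume $R$ is inversion-invariant, so Theorem~A also applies to $F^{-1}$. When $\mathcal A\ne X$, the structural description of the dynamics obtained in the course of proving Theorem~A (partial hyperbolicity together with the picture of the invariant sets, as in the interval-fibre case) shows that $A_M(F^{-1})\ne X$ as well --- heuristically, a trivial attractor goes with a trivial repeller --- and that $\nw(F)$ exceeds $\mathcal A\cup A_M(F^{-1})$ only by a set of zero Lebesgue measure: indeed a.e.\ $x$ has $\omega(x)\subseteq\mathcal A$ and the backward $\omega$-limit contained in $A_M(F^{-1})$, so the remaining non-wandering points are those whose forward orbit tends to the attractor and whose backward orbit tends to the repeller, a null set by the skew-product analysis. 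By Theorem~A both $\mathcal A$ and $A_M(F^{-1})$ then have zero Lebesgue measure, and hence so does $\nw(F)$.

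\smallskip

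\emph{Main obstacle.} Neither implication is formal given only the \emph{statement} of Theorem~A: the degenerate skew product $(x,y)\mapsto(Ax,y)$ satisfies all conclusions of Theorem~A yet is not transitive, so (i) and (ii) must use properties of the \emph{elements of $R$} that are established while proving Theorem~A --- the $u$-saturation of the statistical $\omega$-limit set of a generic point, and enough control on the invariant sets of $F\in R$. The crux is turning ``$\As(F)=X$'' into the pointwise statement ``$\oms(x)=X$ for a.e.\ $x$'': this is a zero--one phenomenon that genuinely re-invokes the minimality and absolute continuity of the unstable foliation of the transitive Anosov base (and accessibility), i.e.\ the engine behind Theorem~A itself.
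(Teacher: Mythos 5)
Both halves of your plan contain genuine gaps, and in each case the missing ingredient is something much more elementary than the machinery you invoke. For the case $\As(F)=X$: you are right that the bare statement of Theorem~A cannot give transitivity (your example $(x,y)\mapsto(Ax,y)$ makes that point correctly), but the paper does not close this gap with a Hopf-type argument --- it proves the stronger dichotomy directly. Lemma~\ref{thin} states: either $\leb(\As)=0$ or $\oms(x)=X$ for $\leb$-almost every $x$, and the pointwise statement comes from the structure established in Lemma~\ref{ast} (for a.e.\ $x$ one has $\oms(x)=\cu(a_i)$ for some attracting periodic point $a_i$ of $f_p$, each such set being Lyapunov stable and weakly transitive), combined with the argument that a positive-measure attractor forces some $\cu(a)$ to coincide with some $\cs(r)$ and hence, being Lyapunov stable in both time directions and saturated by stable and unstable leaves, with all of $X$. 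Your proposed route has two concrete problems: accessibility (and any ergodicity of Lebesgue measure) is neither assumed nor proved for elements of $R$, so it cannot be ``re-invoked''; and, more seriously, the set $E_B=\{x:\oms(x)\cap B=\emptyset\}$ is saturated by \emph{stable} leaves (points on one stable leaf have the same statistical $\om$-limit set), not by unstable ones. Lemma~\ref{sat} says that the subset $\oms(x)\subset X$ is $u$-saturated; it does not say that $x\mapsto\oms(x)$ is constant along unstable leaves, so the principle ``an invariant, essentially $u$-saturated, non-null set is conull'' does not even apply to $E_B$ as you set it up.

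For the case $\leb(\As)=0$: the paper's argument (Lemma~\ref{nw}) is general and needs nothing about $F^{-1}$ or about skew products. If the Milnor attractor $\A$ is Lyapunov stable and $\leb(\A)=0$, then every point $x$ of the basin $\{x:\om(x)\subset\A\}$ that is not in $\A$ is \emph{wandering}: if $x$ were non-wandering, picking $y\in\om(x)\subset\A$ gives $y\arr x$, and Lemma~\ref{arr} (Lyapunov stability) forces $x\in\A$, a contradiction; since the basin has full measure, $\leb(\nw(F))=0$. This Lyapunov-stability step is exactly what is missing from your sketch. Your version asserts that the non-wandering points outside $\mathcal A\cup A_M(F^{-1})$ --- points attracted forward to the attractor and backward to the repeller --- form ``a null set by the skew-product analysis''; but the set of points attracted in both time directions is typically of \emph{full} measure, so the only way to justify your sentence is to show that the non-wandering points among them already lie in the attractor, which is precisely the argument above and is nowhere supplied. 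The auxiliary claims you lean on (``$\mathcal A\ne X$ implies $A_M(F^{-1})\ne X$'', and that $\nw(F)$ exceeds $\mathcal A\cup A_M(F^{-1})$ only by a null set) are asserted rather than proved, and neither is needed once Lemma~\ref{nw} is in place.
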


\section{Sketch of the proof} \label{sketch}
The proof is based on two well-known ideas. The first one is Theorem~$11.16$ from~\cite{BDV}, which implies the following as an easy consequence:
\begin{lem}[consequence of \cite{BDV}, theorem~$11.16$] \label{sat}
For any $C^2$-smooth partially hyperbolic diffeomorphism with invariant splitting of type $E^u \oplus E^{cs}$ the statistical $\om$-limit set of almost any point with respect to the Lebesgue measure is saturated by the unstable leaves, i.e. if $x \in \oms(y)$, then $\wu(x) \subset \oms(y)$.
\end{lem}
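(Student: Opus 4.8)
The plan is to derive the lemma from the theory of Gibbs $u$-states. Recall that a \emph{Gibbs $u$-state} of a $C^2$ partially hyperbolic diffeomorphism with splitting $E^u\oplus E^{cs}$ is an invariant probability measure whose conditional measures along the unstable leaves are absolutely continuous with respect to the induced Riemannian volume on the leaves. From Theorem~$11.16$ of \cite{BDV} --- together with the absolute continuity of the disintegration of the Lebesgue measure along the unstable foliation, which is what makes it possible to pass from invariant unstable disks to Lebesgue-almost every point --- I will use two facts: (a) for Lebesgue-almost every $y$, every weak-$*$ accumulation point of the empirical measures $\mu^y_N=\frac1N\sum_{n=0}^{N-1}\delta_{F^n y}$ is a Gibbs $u$-state; and (b) the support of a Gibbs $u$-state is a union of entire unstable leaves.

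Granting (a) and (b), the rest is elementary. Fix $y$ in the full-measure set provided by (a) and take $x\in\oms(y)$. First, every weak-$*$ accumulation point $\mu$ of $(\mu^y_N)_N$ satisfies $\mathrm{supp}\,\mu\subset\oms(y)$: if $z\in\mathrm{supp}\,\mu$ and $U$ is a neighbourhood of $z$, shrink it to an open one; then $\mu(U)>0$, and along a subsequence $\mu^y_{N_k}\to\mu$, whence $\limsup_N\mu^y_N(U)\ge\liminf_k\mu^y_{N_k}(U)\ge\mu(U)>0$, i.e. $z\in\oms(y)$. Second, since $x\in\oms(y)$, for every $m$ we have $\limsup_N\mu^y_N\bigl(\overline{B(x,1/m)}\bigr)>0$; picking $N_k\to\infty$ along which this quantity stays bounded away from $0$, and then a further subsequence along which $\mu^y_{N_k}$ converges weakly-$*$ to some $\mu_m$, we get $\mu_m\bigl(\overline{B(x,1/m)}\bigr)>0$ (the ball is closed), hence a point $x_m\in\mathrm{supp}\,\mu_m\cap\overline{B(x,1/m)}$, so that $x_m\to x$. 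By (a) each $\mu_m$ is a Gibbs $u$-state, so by (b) the whole leaf $\wu(x_m)$ lies in $\mathrm{supp}\,\mu_m$, hence in $\oms(y)$. Finally $\oms(y)$ is closed and the strong unstable foliation is continuous, so every compact sub-arc of $\wu(x)$ is a Hausdorff limit of compact sub-arcs of the leaves $\wu(x_m)$ and therefore lies in $\overline{\bigcup_m\wu(x_m)}\subset\oms(y)$. Thus $\wu(x)\subset\oms(y)$, as required.

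The substantive content is entirely in fact (a), and this is where I expect the main obstacle. If Theorem~$11.16$ of \cite{BDV} is already formulated so as to control the empirical measures of Lebesgue-almost every point --- and, given that the lemma is announced there as an easy consequence, it most likely is --- then (a) is immediate and the lemma reduces to the elementary argument above. Otherwise one has only the classical \emph{averaged} statement: the time averages $\frac1N\sum_{n=0}^{N-1}F^n_*\bigl(\mathrm{Leb}_D/\mathrm{Leb}_D(D)\bigr)$ of normalised volume on an unstable disk $D$ accumulate solely on Gibbs $u$-states, hence so do the time averages of the Lebesgue measure, after disintegrating it over a countable family of unstable plaques and using that the set of Gibbs $u$-states is convex, weak-$*$ compact and closed under passage to ergodic components --- and then the work is to upgrade this to the \emph{per-point} statement (a), i.e. to control, for Lebesgue-almost every $y$, the unstable conditionals of \emph{every} weak-$*$ accumulation point of $(\mu^y_N)_N$, not merely of the averaged measures; I would expect this to rest on the bounded-distortion estimates underlying the construction of Gibbs $u$-states rather than on soft convexity. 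Fact (b) should in any case be immediate from the definition of a Gibbs $u$-state and the absolute continuity of the stable holonomies. Within the elementary argument the one point that needs care is the possibility that the orbit of $y$ meets every neighbourhood of $x$ with positive upper frequency but meets arbitrarily small neighbourhoods with frequency tending to $0$: this is exactly why the argument runs through the approximating points $x_m$ and invokes closedness of $\oms(y)$, rather than the (generally false) identity between $\oms(y)$ and the union of the supports of all weak-$*$ accumulation points of $(\mu^y_N)_N$.
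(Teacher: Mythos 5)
Your proposal is correct and follows essentially the intended route: the paper gives no argument of its own beyond citing Theorem~11.16 of \cite{BDV}, and that theorem already contains both of your facts (a) and (b) --- for Lebesgue-almost every point every weak-$*$ accumulation point of the empirical measures is a Gibbs $u$-state, and the support of any Gibbs $u$-state is $u$-saturated --- so no upgrading from the averaged statement is needed. Your elementary bridge from these facts to the lemma (the approximating points $x_m$, closedness of $\oms(y)$, and continuity of the unstable lamination) is sound and is exactly the kind of ``easy consequence'' the paper has in mind.
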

Note that this result is applicable for systems with invariant splitting of type $E^u \oplus E^c \oplus E^s$, since we may take $E^{cs}=E^c \oplus E^s$. An analog of Lemma~\ref{sat} also holds for $\om$-limit sets, see~\cite{MO}.

The second idea is to use semicontinuity technique to obtain Lyapunov stability. We will prove an analog of the following theorem in $C^r$ for skew products with the fiber a circle.
\begin{thm}[\cite{MP}, Theorem~6.1] \label{MP}
For a $C^1$-topologically generic diffeomorphism and any periodic point $p$ the point $p$ is hyperbolic and the closure of the unstable manifold of $p$ is Lyapunov stable.
\end{thm}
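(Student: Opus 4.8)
The plan is to split the two assertions. That every periodic point of a $C^1$-topologically generic diffeomorphism is hyperbolic is the periodic part of the Kupka--Smale theorem: for each $k$ the set of diffeomorphisms all of whose periodic points of period at most $k$ are hyperbolic is open and dense, and intersecting over $k$ gives a residual set. For the rest I would follow the semicontinuity (Kuratowski) technique of the quoted reference. On the open set where all periodic points of period $\le k$ are hyperbolic these periodic orbits admit continuous continuations, so I would cover this set by countably many open sets $\mathcal U$, on each of which finitely many continuous maps $f\mapsto p_j(f)$ enumerate all periodic points of period $\le k$. On such a $\mathcal U$ put $\Phi_j(f)=\overline{\wu(p_j(f),f)}$, an element of the space of compact subsets of $X$ with the Hausdorff distance. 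Since every compact piece of an unstable manifold varies continuously with $f$ (stable manifold theorem), each point of $\overline{\wu(p_j(f),f)}$ is a limit of points of $\overline{\wu(p_j(g),g)}$ with $g\to f$; thus $\Phi_j$ is lower semicontinuous, so by the Kuratowski lemma its points of continuity form a residual subset of $\mathcal U$. Intersecting these over $j$, over the sets $\mathcal U$, over $k$, and with the Kupka--Smale set produces a residual set $R$ such that for $F\in R$ every periodic point is hyperbolic and $F$ is a continuity point of the corresponding $\Phi_j$.

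It then remains to prove the local statement: \emph{if $f$ is a point of continuity of $\Phi=\overline{\wu(p,\cdot)}$, then $\Lambda:=\overline{\wu(p,f)}$ is Lyapunov stable.} I would argue by contradiction. If $\Lambda$ is not Lyapunov stable there are $\eps>0$, points $z_k$ with $\dist(z_k,\Lambda)\to 0$, and times $n_k$ (necessarily $n_k\to\infty$) with $\dist(f^{n_k}(z_k),\Lambda)\ge\eps$. As $\wu(p,f)$ is dense in $\Lambda$, choose $p_k\in\wu(p,f)$ with $\dist(p_k,z_k)\to 0$. The idea is to make a $C^1$-small, localized perturbation $g_k$ of $f$ supported in a small ball around $p_k$ that carries $p_k$ onto the orbit of $z_k$, while leaving untouched the local unstable manifold of $p$ (so that $p_k$ still belongs to $\wu(p,g_k)$) and the finite orbit segment $z_k,f(z_k),\dots,f^{n_k-1}(z_k)$ (so that this segment is followed, i.e.\ $g_k^{j}(p_k)=f^{j}(z_k)$ for $j\le n_k$). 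Then $g_k^{n_k}(p_k)\in\wu(p,g_k)$ equals $f^{n_k}(z_k)$, so $\overline{\wu(p,g_k)}$ contains a point at distance $\ge\eps$ from $\Lambda$; but $g_k\to f$, contradicting continuity of $\Phi$ at $f$, which forces $\overline{\wu(p,g_k)}\sub B_{\eps/2}(\Lambda)$ for $k$ large. Together with the construction of $R$, this proves the theorem.

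The main obstacle is the perturbation step, where all the work lies. A $C^1$-small perturbation realizing the displacement $\dist(p_k,z_k)$ must be supported on a ball whose radius is much larger than that displacement; this ball has to avoid simultaneously the point $p$, the backward orbit of $p_k$ and the local unstable manifold of $p$ (to keep $p_k$ on $\wu(p,g_k)$), and the finite forward orbit segment of $z_k$ (to route it faithfully). Reconciling all this forces a careful choice of $p_k$ (for instance non-periodic, with its forward and backward orbits staying out of the small ball) and a separate treatment of the degenerate situation in which the $z_k$ accumulate on periodic points of $\Lambda$, where no room is left near $p_k$; this last case, and the construction of the perturbation itself, are handled by the usual local perturbation and connecting-lemma type tools in the $C^1$ topology.
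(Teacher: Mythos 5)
You are proposing a proof of Theorem~\ref{MP}, which the paper does not prove but quotes from \cite{MP} (in the formulation of \cite{ABD}); your outline follows essentially the same route as that cited proof --- Kupka--Smale genericity for hyperbolicity, lower semicontinuity of $f\mapsto\overline{W^u(p(f))}$ together with the semicontinuity lemma, and a $C^1$ perturbation showing that Lyapunov instability at a continuity point would force a discontinuity --- which is exactly the scheme the paper itself recalls in Steps 1--4 of the proof of Lemma~\ref{Wu}, with the last step done via the connecting lemma. The only caveat is that the single localized perturbation you first describe is not sufficient on its own (the orbit segment of $z_k$, the backward orbit of $p_k$, or nearby periodic points can interfere with the support), so the real weight of that step rests on Hayashi's connecting lemma, as you correctly acknowledge; note for comparison that the paper's own analogue (Lemma~\ref{pert}) replaces precisely this step by a monotonicity argument in the circle fiber in order to work in $C^r$, where connecting-lemma technology is unavailable.
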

\begin{rem*}
This theorem is stated in~\cite{MP} for singularities of vector fields. The formulation above is from~\cite[$\S3.1$]{ABD}.
\end{rem*}

Now we can give the plan of our proof. Assume for simplicity that the Anosov diffeomorphism $A$ in the base has a fixed point $p$. If this is not the case, we need to consider a periodic point and adapt some parts of the proof --- see section~\ref{nofixed}. Consider $f_p: \{p\} \times \ss \to \{p\} \times \ss$, the fiber map  above $p$ of the skew product $F$. For generic skew products the map $f_p$ is a Morse-Smale diffeomorphism and has finitely many attractors $a_i \in \{p\} \times \ss$.

Consider the closures of the unstable leaves of the points $a_i$ (notation: $\cu(a_i)$). Note that since $a_i$ is an attractor of $f_p$, the leaf $\wu(a_i)$ is also the unstable manifold of the periodic point $a_i$. Under some genericity assumption on the skew product we prove that for a Lebesgue-generic point $x$ the set $\oms(x)$ contains at least one of the points $a_i$. By Lemma~\ref{sat} the set $\oms(x)$ contains $\cu(a_i)$ as well. But by the analog of Theorem~\ref{MP} the set $\cu(a_i)$ is Lyapunov stable. Using this fact and the definition of Lyapunov stability, it is easy to prove that $\oms(x)=\cu(a_i)$. Thus $A_{stat}$ is a union of some of the sets $\cu(a_i)$ and therefore is Lyapunov stable. A Lyapunov stable $\As$ always coincides with $A_M$.

If $\As$ has positive measure, it must (since it is invariant) intersect the $\As$ of the inverse skew product, which is Lyapunov stable under the inverse system. Using the intersection, we show that for some $i$ the set $\cu(a_i) \subset \As$ coincides with $\cs(r_j)$ for some repelling periodic point $r_j$ of the map $f_p$. This set is Lyapunov stable for both $F$ and $F^{-1}$. Then it is easy to show that this set is the whole phase space.

\section{Notation}
We continue the notation introduced before the statement of Theorem~A.
\begin{description}
\item[$r=2, 3, \dots, \infty$] is a fixed number that denotes the smoothness of the class $C^r$ for which we prove the theorem.

\item[$\leb$] is the probability (i.e. $\leb(X)=1$) Lebesgue measure on $X$.

\item[$\ws (\wu, W^c, W^{cs}, W^{cu})$] denotes a stable (unstable, center, center-stable, center-unstable) leaf.

\item[$x \arr y,$] is written for $x, y \in X$ if arbitrary small neighborhoods of $x$ and $y$ are connected by a trajectory going from the neighborhood of $x$ to the neighborhood of $y$.

\item[$Sat(F) \subset X$] is the set of all points $x \in X$ such that $\oms(x)$ is saturated by the unstable leaves. By Lemma~\ref{sat} above $\leb(Sat)=1$.

\item[$p$:] we choose and fix any fixed point $p \in B$ of the Anosov diffeomorphism $A$.
 If $A$ has no fixed points, we need to consider a periodic point and adapt some parts of the proof --- see section~\ref{nofixed}.

\item[$\ss_p$] is the fiber $\{p\} \times \ss$ above the fixed point $p$.

\item[$f_p: \ss_p \to \ss_p$] is the fiber map above $p$ of the skew product $F$.

\item[$\ms \sub \P$] is the set of $C^r$-skew products, such that the map $f_p$ is Morse-Smale for the point $p$ fixed above. Since Morse-Smale maps form an open and dense subset of $\Diff_+^r(\ss)$, the set $\ms$ is an open and dense subset of $\P$.

\item[$a_i \in X$] are the attracting periodic points of the Morse-Smale map $f_p$.

\item[$r_i \in X$] are the repelling periodic points of $f_p$.

\item[$\pi_B: X \to B$] is the projection on the base along the fibers.
\end{description}

\section{Preliminaries}
Since $A$ is transitive, the unstable leaves of $A$ are dense in $B$ (see~\cite[\S2.1]{Pes}). Note that the (un)stable leaves of $F$ are mapped in the (un)stable leaves of $A$ by the projection $\pi_B$.

The following lemma easily follows from the definition of Lyapunov stability.
\begin{lem} \label{arr}
If $x \arr y$ and $x$ belongs to some closed Lyapunov stable set, then $y$ also belongs to this set.
\end{lem}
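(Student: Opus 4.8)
The plan is to unwind the two definitions involved and to invoke one elementary fact about closed sets in a metric space. First I would fix a closed Lyapunov stable set $Y$ with $x \in Y$, and an arbitrary open neighborhood $U$ of $Y$. Lyapunov stability then provides a neighborhood $V \sub U$ of $Y$ which is forward invariant into $U$, i.e.\ $F^n(V) \sub U$ for all $n \ge 0$. Since $x \in Y \sub V$, this $V$ is in particular a neighborhood of $x$, so it is eligible to play the role of the neighborhood of $x$ in the definition of $x \arr y$.

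Next, for an arbitrary neighborhood $O$ of $y$, the relation $x \arr y$ (applied to the neighborhoods $V \ni x$ and $O \ni y$) supplies a point $z \in V$ and an integer $n \ge 0$ with $F^n(z) \in O$. But $F^n(z) \in F^n(V) \sub U$, hence $O \cap U \ne \emptyset$. As $O$ ranges over all neighborhoods of $y$ this shows $y \in \overline U$, and then letting $U$ range over all open neighborhoods of $Y$ gives $y \in \bigcap_{U \supset Y} \overline U$, the intersection being taken over open neighborhoods of $Y$.

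Finally I would use the elementary fact that for a closed subset $Y$ of a metric space one has $\bigcap_{U \supset Y} \overline U = Y$: a point at distance $\delta > 0$ from $Y$ cannot lie in the closure of the $\delta/2$-neighborhood of $Y$. This yields $y \in Y$, as desired.

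I do not expect any genuine obstacle: the statement is soft and follows formally from the definitions of $\arr$ and of Lyapunov stability. The only point that deserves a moment's care is the last step, where closedness of the Lyapunov stable set (and metrizability of $X$) is genuinely used — without closedness the argument would only place $y$ in the closure of $Y$.
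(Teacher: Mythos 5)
Your proof is correct and is exactly the direct unwinding of the definitions that the paper has in mind (the paper states this lemma without proof, as an easy consequence of the definition of Lyapunov stability). The one step worth the care you gave it — passing from $y \in \overline{U}$ for every open neighborhood $U \supset Y$ to $y \in Y$ via closedness of $Y$ — is handled correctly.
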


\begin{lem} \label{noescape}
Let $A \subset X$ be closed and Lyapunov stable for $F$. Then for any $x \in X$
\begin{itemize}
\item if $\om(x)$ intersects $A$, then $\om(x) \subset A$
\item if $\oms(x)$ intersects $A$, then $\oms(x) \subset A$.
\end{itemize}
\end{lem}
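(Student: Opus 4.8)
The plan is to invoke the definition of Lyapunov stability directly; the only real content is the elementary remark that once the forward orbit of a point enters the ``inner'' neighbourhood of $A$ it can never leave the ``outer'' one.

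Concretely, I would fix $x \in X$, take an arbitrary open neighbourhood $U$ of $A$, and use Lyapunov stability to produce an open set $V$ with $A \subset V \subset U$ and $F^n(V) \subset U$ for all $n \ge 0$. The main step is then to show that, in each of the two cases, the forward orbit of $x$ enters $V$ at some finite time $n_0$. Granting this, $F^n(x) \in U$ for all $n \ge n_0$, i.e.\ the forward orbit of $x$ is eventually contained in $U$; and from this both conclusions follow at once. Indeed, a point $z \notin \overline U$ has a neighbourhood disjoint from $\overline U$, hence visited by the orbit of $x$ only finitely often, so $z \notin \om(x)$ and, having visiting frequency $0$, $z \notin \oms(x)$. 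Thus $\om(x) \subset \overline U$ and $\oms(x) \subset \overline U$.

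To produce the entry time: if $\om(x)$ meets $A$, I would pick $z \in \om(x) \cap A$ and a sequence $n_k \to \infty$ with $F^{n_k}(x) \to z$; since $z \in A \subset V$ and $V$ is open, $F^{n_k}(x) \in V$ for all large $k$, and any such $n_k$ works as $n_0$. If instead $\oms(x)$ meets $A$, I would pick $y \in \oms(x) \cap A$; then $V$, being a neighbourhood of $A$, is a neighbourhood of $y$, so by the definition of $\oms$ the visiting frequency of the orbit of $x$ to $V$ has positive $\limsup$, and in particular the set $\{\, n : F^n(x) \in V \,\}$ is infinite — take $n_0$ to be any of its elements. Finally, letting $U$ range over all open neighbourhoods of $A$ and using that $A$ is closed (so that $\bigcap\{\, \overline U : U \text{ open},\ A \subset U \,\} = A$) upgrades $\om(x) \subset \overline U$ to $\om(x) \subset A$, and likewise for $\oms$.

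I do not expect a genuine obstacle here; the only points to handle with a little care are that positive visiting frequency is used solely to guarantee a single time at which the orbit lies in $V$ — after that, Lyapunov stability alone traps the orbit in $U$ — and that one must intersect over all neighbourhoods of $A$ and invoke closedness of $A$ to obtain the sharp inclusion rather than merely $\om(x), \oms(x) \subset \overline U$.
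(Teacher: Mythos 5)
Your proof is correct: the trapping argument (once the orbit enters the inner neighbourhood $V$ it is confined to $U$ forever, and then intersecting $\overline U$ over all neighbourhoods of the closed set $A$) is exactly the mechanism behind the paper's treatment, and your handling of the $\oms$ case --- using positive frequency only to get a single entry time --- is sound. The paper merely packages this as a one-line consequence of its Lemma~\ref{arr} (if $x \arr y$ and $x$ lies in a closed Lyapunov stable set, then so does $y$), noting that the intersection point satisfies $z \arr y$ for every $y \in \om(x)$ (or $\oms(x)$), whereas you inline that same argument directly from the definition of Lyapunov stability.
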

\begin{proof}
It is an immediate consequence of Lemma~\ref{arr}, since for any $x, y \in \om(x)$ (or $\oms(x)$) $x \arr y$.
\end{proof}

\begin{lem}
\label{amas}
If $\As$ is Lyapunov stable, then it coincides with $\A$.
\end{lem}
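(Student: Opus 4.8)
The plan is to prove the two inclusions $\As \subseteq \A$ and $\A \subseteq \As$ separately; only the second uses Lyapunov stability, while the first holds for every skew product in $\P$ (in fact for any continuous self-map of a compact space). For $\As \subseteq \A$, note that $\oms(x) \subseteq \om(x)$ for every $x$: a point each of whose neighborhoods is visited by the orbit of $x$ with positive upper frequency is in particular visited infinitely often. Hence any closed set containing $\om(x)$ for Lebesgue-almost every $x$ also contains $\oms(x)$ for Lebesgue-almost every $x$; applying this to $\A$ and using the minimality of $\As$ gives $\As \subseteq \A$.

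For the reverse inclusion, the first step is to check that $\oms(x) \neq \emptyset$ for every $x \in X$. Let $\mu$ be a weak-$*$ limit point of the empirical measures $\frac1N \sum_{n=0}^{N-1}\delta_{F^n(x)}$ along some subsequence $N_k \to +\infty$; such a $\mu$ exists because the space of Borel probability measures on the compact phase space $X$ is weak-$*$ compact. Since $\mu$ is a probability measure, $\operatorname{supp}\mu \neq \emptyset$, and for any $y \in \operatorname{supp}\mu$ and any open neighborhood $U$ of $y$ the portmanteau theorem yields $\liminf_{k \to \infty} \frac{1}{N_k} |\{ 0 \le n < N_k : F^n(x) \in U \}| \ge \mu(U) > 0$; since the $\limsup$ over all $N$ dominates any subsequential $\liminf$, this shows $y \in \oms(x)$. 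Thus $\emptyset \neq \operatorname{supp}\mu \subseteq \oms(x)$.

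Now fix a full-measure set $S \subseteq X$ with $\oms(x) \subseteq \As$ for all $x \in S$ (it exists by the definition of the statistical attractor). For $x \in S$ the set $\oms(x)$ is non-empty and contained in $\As$, and since $\oms(x) \subseteq \om(x)$ the $\om$-limit set $\om(x)$ meets $\As$. Because $\As$ is closed and, by hypothesis, Lyapunov stable, Lemma~\ref{noescape} upgrades this to $\om(x) \subseteq \As$ for every $x \in S$. Hence $\As$ is a closed set containing the $\om$-limit sets of Lebesgue-almost all points, so $\A \subseteq \As$ by the minimality of the Milnor attractor. Combining the two inclusions gives $\As = \A$.

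The argument is essentially formal once Lemma~\ref{noescape} is in hand; the one point requiring a short separate argument — and the only mild obstacle I anticipate — is the non-emptiness of $\oms(x)$, since this is exactly what lets one pass from ``$\om(x)$ meets $\As$'' to ``$\om(x) \subseteq \As$''.
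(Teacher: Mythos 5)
Your proof is correct and follows essentially the same route as the paper: the inclusion $\As \subseteq \A$ from $\oms(x) \subseteq \om(x)$, and the reverse inclusion by noting $\om(x)$ meets $\As$ for almost every $x$ and invoking Lemma~\ref{noescape} together with the minimality of $\A$. The only addition is your explicit verification (via weak-$*$ limits of empirical measures) that $\oms(x) \neq \emptyset$, a point the paper uses implicitly; this is a welcome detail but not a different argument.
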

\begin{proof}
Note that $\As$ always is a subset of $\A$ (it was first proved in~\cite[ $\S8.2$]{AAISh}).  Indeed, for any point $x$ it is easy to see that $\oms(x) \subset \om(x)$, thus $\As \subset \A$.  Let us prove that $\A \subset \As$ if $\As$ is Lyapunov stable. By the definition of $\As$ for almost any point $x$ we have $\oms(x) \subset \As$, thus $\om(x)$ intersects $\As$. By Lemma~\ref{noescape} this implies $\om(x) \subset \As$.
\end{proof}

This means that we only need to prove that $\As$ is Lyapunov stable and either has zero measure or coincides with the whole phase space. So we study $\As$, and the word ``attractor'' will refer to $\As$.

We will often use that a $C^r$-small perturbation of a skew product is also small in the topology of $\P$. It follows from the following known consequence of the implicit function theorem (see~\cite[Lemma~$3.2$]{BRWZ}):
\begin{lem}\label{l:inverse}
The map $F \mapsto F^{-1}$ is a homeomorphism of $\Diff^r(M)$.
\end{lem}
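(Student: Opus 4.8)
The plan is to use that $\iota\colon F \mapsto F^{-1}$ is an \emph{involution} on $\Diff^r(M)$ — it is well defined because the inverse of a $C^r$ diffeomorphism is again $C^r$ by the inverse function theorem — so that $\iota \circ \iota = \id$ and it suffices to prove that $\iota$ is continuous; continuity of $\iota$ then automatically gives continuity of $\iota^{-1} = \iota$, hence a homeomorphism. Since $M$ is compact, I would fix a finite atlas once and for all and reduce every estimate to coordinate charts, so that ``$C^r$-small'' means that all partial derivatives up to order $r$ are uniformly small in these charts.

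First I would handle the $C^0$ level: if $F_n \to F$ uniformly then $F_n^{-1} \to F^{-1}$ uniformly. Indeed, fix $y \in M$, put $x = F^{-1}(y)$ and $z = F_n^{-1}(y)$; then $F_n(z) = y = F(x)$, so $d(F(z),F(x)) = d(F(z),F_n(z)) \le \|F-F_n\|_{C^0}$, and uniform continuity of the fixed map $F^{-1}$ (compactness of $M$) converts this into a uniform bound on $d(z,x) = d(F_n^{-1}(y),F^{-1}(y))$.

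The core is an induction on the order of the derivative. The basic identity, obtained by differentiating $G \circ G^{-1} = \id$, is $D(G^{-1})(y) = \bigl[DG(G^{-1}(y))\bigr]^{-1}$. Differentiating it repeatedly (a Faà di Bruno/chain-rule computation) expresses $D^m(G^{-1})$, for each $m$ with $1 \le m \le r$, as a polynomial in the quantities $D^j G \circ G^{-1}$ $(1 \le j \le m)$, the lower-order derivatives $D^i(G^{-1})$ $(1 \le i \le m-1)$, and the entries of the matrix inverse $\bigl[DG \circ G^{-1}\bigr]^{-1}$. Assume inductively that $F_n \to F$ in $C^r$ forces $F_n^{-1} \to F^{-1}$ in $C^{m-1}$ (the case $m-1=0$ was just proved). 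Then $D^j F_n \to D^j F$ uniformly and $D^j F$ is uniformly continuous, while $F_n^{-1} \to F^{-1}$ uniformly, so $D^j F_n \circ F_n^{-1} \to D^j F \circ F^{-1}$ uniformly; the matrix $DF \circ F^{-1}$ is uniformly invertible (its inverse, $DF^{-1}$, is bounded on the compact $M$), hence for large $n$ the matrices $DF_n \circ F_n^{-1}$ stay uniformly bounded away from singular and their inverses converge uniformly; and $D^i(F_n^{-1}) \to D^i(F^{-1})$ uniformly for $i \le m-1$ by the induction hypothesis. Substituting into the polynomial formula yields $D^m(F_n^{-1}) \to D^m(F^{-1})$ uniformly, closing the induction (and, taking all $m$, also covering $r = \infty$).

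The main obstacle I expect is bookkeeping rather than ideas: writing down — or citing — the Faà di Bruno-type formula for $D^m(G^{-1})$ and checking that it involves only the listed ingredients, and then ensuring that every convergence above is genuinely \emph{uniform} on $M$, which is exactly where compactness of $M$, uniform invertibility of $DF$ along $F^{-1}$, and uniform continuity of the derivatives of the fixed limit $F$ are used. Alternatively, the same conclusion can be repackaged as a direct application of the finite-dimensional implicit function theorem to the equation $G(x) = y$, tracking the dependence of the solution on the parameter $G$ — the route of \cite{BRWZ}.
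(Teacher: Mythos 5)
Your proof is correct, but it follows a different route from the paper: the paper does not prove the lemma at all, it simply cites \cite[Lemma~3.2]{BRWZ}, describing the statement as a known consequence of the implicit function theorem (the alternative you mention in your last sentence is exactly the cited route). Your argument is the elementary, self-contained one: observe that $F\mapsto F^{-1}$ is an involution, so only continuity is needed; get the $C^0$ case from uniform continuity of the fixed inverse on the compact manifold; then induct on the order $m\le r$ of the derivative using $D(G^{-1})=\bigl[DG\circ G^{-1}\bigr]^{-1}$ and the fact that $D^m(G^{-1})$ is a universal polynomial in $D^jG\circ G^{-1}$, lower-order derivatives of $G^{-1}$, and the entries of $\bigl[DG\circ G^{-1}\bigr]^{-1}$, with uniform convergence of each ingredient coming from compactness, uniform continuity of the derivatives of the limit map, and uniform invertibility of $DF\circ F^{-1}$ (its inverse $D(F^{-1})$ being bounded). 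Since the $C^r$ topology on $\Diff^r(M)$ (including $r=\infty$) is metrizable, the sequential argument suffices, and the induction covers $r=\infty$ by treating every finite $m$. What each approach buys: the citation disposes of the lemma in one line and embeds it in a general framework for diffeomorphism groups, while your induction makes the paper self-contained at the cost of the Fa\`a di Bruno--type bookkeeping and the finite-atlas chart management, which you correctly identify as the only delicate (but routine) points.
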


\section{Attracting to $a_i$} \label{s:toa}
\begin{wrapfigure}{l}[-2mm]{50mm}
\vspace{-10mm}
\begin{center}
\begin{tikzpicture}[scale=0.5]
\draw [thin, gray] (0,0) -- (8,4) -- (8,14) -- (0,10) -- cycle;
\draw [fill] (3, 6.5) circle[radius=0.15];
\draw [thick] (3, 4) circle[radius=0.15];
\draw [thick] (3, 9) circle[radius=0.15];
\draw [fill] (3, 8) circle[radius=0.08];
\draw (3, 0) -- (3, 3.85);
\draw (3, 4.15) -- (3, 8.85);
\draw (3, 9.15) -- (3, 13);

\draw (3.2, 3.2) to (3, 3) to (2.8, 3.2);
\draw (3.2, 7.5) to (3, 7.3) to (2.8, 7.5);
\draw (3.2, 5) to (3, 5.2) to (2.8, 5);
\draw (3.2, 10) to (3, 10.2) to (2.8, 10);

\node [left] at (3, 4) {$r_i$};
\node [left] at (3, 6.5) {$a_j$};
\node [left] at (3, 8) {$y$};
\node [left] at (3, 0.4) {$S^1_p$};
\node at (6.3, 11.7) {$W^{cs}(a_j)$};

\node [above] at (7, 9) {$z$};
\draw [fill] (7, 9) circle[radius=0.08];

\draw (3, 8) to [out=30, in=180] (4.5, 8.5) to (5.5, 8.5) to [out=0, in=210] (7, 9);
\draw (4.3, 8.7) to (4.5, 8.5) to (4.3, 8.3);
\draw (4.6, 8.7) to (4.8, 8.5) to (4.6, 8.3);
\draw (5.6, 8.7) to (5.4, 8.5) to (5.6, 8.3);
\draw (5.9, 8.7) to (5.7, 8.5) to (5.9, 8.3);
\node[below] at (5.2, 8.4) {$W^s(y)$};

\draw (7, 9) to [out=-30, in=45] (9, 3) to [out=-135, in=0] (7, 2) to [out=180, in=-30] (3.1, 3.9);
\node [right] at (6, 1.2) {$W^u(r_i)$};
\draw (7.0, 1.8) to (6.8, 2) to (7.0, 2.2);
\draw (7.3, 1.8) to (7.1, 2) to (7.3, 2.2);
\draw (8.7, 3) to (9, 3) to (9, 2.7);
\draw (8.9, 3.2) to (9.2, 3.2) to (9.2, 2.9);
\end{tikzpicture}
\end{center}
\vspace{-30mm}
\end{wrapfigure}

In this section we prove that under certain genericity assumption for any point $x \in Sat$ there is a number $j$ such that $a_j \in \oms(x)$. First we state this assumption (cf. the picture).
\begin{assu} \label{norep}
The map $f_p$ is Morse-Smale and for any repeller $r_i$ of $f_p$ there are
\begin{itemize}
\item a point $z$ on the unstable leaf of $r_i$
\item and a point $y$ on the stable leaf of $z$
\item such that $y \in \ss_p$ and $y$ is not a repeller of $f_p$.
\end{itemize}
\end{assu}

Then we prove that this assumption is generic.
\begin{lem}\label{gen}
Systems satisfying assumption~\ref{norep} form an open and dense subset of $\P$.
\end{lem}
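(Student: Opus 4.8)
The plan is to restate Assumption~\ref{norep} purely in terms of the fixed base map $A$, then prove openness by continuity of the invariant objects involved, and density by a perturbation localized over a single base orbit. Recall that for a partially hyperbolic skew product the strong unstable (resp.\ stable) foliation of $F$ is carried by $\pi_B$ onto the unstable (resp.\ stable) foliation of $A$, and, since $E^u$ (resp.\ $E^s$) is transverse to the fibers and the base leaves are simply connected, every leaf $\wu(x)$ (resp.\ $\ws(x)$) projects diffeomorphically onto $\wu(\pi_B x)$ (resp.\ $\ws(\pi_B x)$). In particular, if $z\in\wu(r_i)$ and $\ws(z)\cap\ss_p\neq\emptyset$, then $\pi_B(z)\in\wu(p)\cap\ws(p)$, i.e.\ $\pi_B(z)$ is a homoclinic point of $p$; conversely, for every homoclinic point $w$ of $p$ there is a well-defined point $z(w)\in\wu(r_i)\cap\ss_w$, and since $w\in\ws(p)$ the leaf $\ws(z(w))$ meets $\ss_p$ in a well-defined point $y(w)$, the image of $z(w)$ under the stable holonomy along the $A$-orbit from $w$ forward to $p$. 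Hence Assumption~\ref{norep} is equivalent to: $f_p$ is Morse--Smale and, for each repeller $r_i$, some homoclinic point $w\neq p$ of $p$ satisfies $y(w)\notin\{r_j\}$, the finite set of repellers of $f_p$ (note that $w=p$, and $w$ near $p$ along $\wu(p)$, only produce $y(w)$ near $r_i$, so only homoclinic points bounded away from $p$ matter).

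For openness, all skew products in $\P$ share the same base $A$, hence the same homoclinic set of $p$, and Morse--Smaleness of $f_p$ is an open condition. Assume $F$ satisfies Assumption~\ref{norep} and fix, for each repeller $r_i$, a witnessing homoclinic point $w^{(i)}\neq p$. Each $r_i$ is a hyperbolic fixed point of $F$ (its center multiplier has absolute value $>1$, together with the hyperbolic base multipliers), so it persists and moves continuously; moreover the compact piece of $\wu(r_i)$ lying over the arc of $\wu(p)$ from $p$ to $w^{(i)}$, and the compact piece of the stable leaf of $z(w^{(i)})$ over the arc of $\ws(p)$ from $w^{(i)}$ back to $p$, depend continuously on the system by continuous dependence of the strong invariant foliations on compact sets. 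Therefore $z(w^{(i)})$ and $y(w^{(i)})$ vary continuously, the condition $y(w^{(i)})\notin\{r_j\}$ is open, and so the set of systems satisfying Assumption~\ref{norep} is open.

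For density we work inside $\ms$, which is open and dense, so fix $F\in\ms$ and treat the finitely many repellers $r_j$ one at a time, with perturbations of pairwise disjoint supports. For a fixed $r_i$, choose a homoclinic point $w_0\neq p$ of $p$ whose entire $A$-orbit is disjoint from $p$ and from the orbits chosen for the other repellers; this is possible because the homoclinic points of $p$ are countable, dense in $\wu(p)$, and distributed over infinitely many $A$-orbits. Set $w_1=A^{-1}(w_0)$; since $w_1$ is non-periodic it is an isolated point of the $A$-orbit of $w_0$, so there is a small ball $U\ni w_1$ in $B$ meeting that orbit only at $w_1$ and avoiding $p$. Perturb $F$, $C^r$-small and supported in $\pi_B^{-1}(U)$, replacing the fiber map $f_{w_1}$ by $\rho_t\circ f_{w_1}$ with $\rho_t\in\Diff_+^r(\ss)$, $\rho_0=\id$, and $t\mapsto\rho_t(z(w_0))$ a non-constant arc in $\ss$. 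Because the backward $A$-orbit of $w_0$ meets $U$ only at $w_1$ and its forward orbit misses $U$, this perturbation leaves unchanged $f_p$ (hence all $r_j$ and $a_j$), the intersection $\wu(r_i)\cap\ss_{w_1}$ (hence $z(w_1)$), and the stable holonomy from $\ss_{w_0}$ to $\ss_p$; it only replaces $z(w_0)=f_{w_1}(z(w_1))$ by $\rho_t(z(w_0))$, which still lies on $\wu(r_i)$ for the new system. Hence the new witness $y(w_0)$ is the fixed stable holonomy applied to $\rho_t(z(w_0))$, so it moves continuously and non-constantly with $t$, and choosing $t$ small with $y(w_0)\notin\{r_j\}$ makes $r_i$ satisfy the reformulated condition.

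Doing this for every repeller with disjoint supports — legitimate because the chosen base orbits are mutually disjoint and each avoids $p$, so a perturbation localized near one does not disturb the witness data of another — and composing the arbitrarily small perturbations yields $G$ arbitrarily $C^r$-close to $F$ with $g_p=f_p$ Morse--Smale and Assumption~\ref{norep} satisfied for all repellers; together with the openness step this proves Lemma~\ref{gen}. The point needing the most care is the bookkeeping in the density step: one must verify that the single-fiber perturbation moves exactly the point $z(w_0)$ while freezing $f_p$, the periodic orbits, the relevant compact pieces of $\wu(r_i)$ and of the stable leaves, and the stable holonomy into $\ss_p$ — which is precisely why the base homoclinic orbits are chosen non-recurrent to $p$ and mutually disjoint — and, for openness, the continuous dependence of the strong invariant foliations on the system.
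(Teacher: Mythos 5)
Your proof is correct and follows essentially the same strategy as the paper: recast Assumption~\ref{norep} via a homoclinic point of $p$ in the base, lift it to the unique point of $W^u(r_i)$ above it, get openness from continuous dependence of the continued periodic points and compact leaf pieces, and get density by perturbing the fiber map one step before the homoclinic point while freezing the dynamics along the rest of the relevant orbits (near $p$, along the backward orbit, and along the forward/stable side into $\ss_p$) so that only $z$ moves. The only cosmetic difference is bookkeeping: the paper treats each repeller separately and intersects finitely many open dense sets within a small neighborhood of a given $F_0\in\ms$, whereas you perturb for all repellers at once with disjointly supported perturbations over distinct homoclinic orbits.
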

\bp
We start with the main idea of the proof, ignoring some technical details.
Consider the stable leaf $\ws(p, A)$ of the point $p$ for the map $A$. Take an open ball $\ws_1(p, A)$ in this leaf with center $p$ and radius $1$. Define the set
$$T=\ws_1(p, A) \times \ss, T \subset W^{cs}(a_j).$$
Note that $T$ is transversal to the unstable leaves of $F$ and is foliated by the local stable leaves of the points of $\ss_p$. Fix any intersection point
$$z_B \in \ws_1(p, A) \cap (\wu(p, A) \setminus \{p\}).$$
Such point exists since the unstable leaves of $A$ are dense. Given $F$ and $r_i$, we define a map $\pi_B^{-1}: W^u(p, A) \mapsto W^u(r_i)$ that maps a point of $W^u(p, A)$ to the unique point of $W^u(r_i)$ above it. Set $z=\pi_B^{-1}(z_B)$, then $z \in W^u(r_i) \cap T$.

Now we can consider the following condition:
\begin{equation} \tag{*}
z \text{ is not in a local stable leaf of some repeller of } f_p.
\end{equation}
Note that $*$ implies assumption~\ref{norep} with $y=W^s_{loc}(z) \cap \ss_p$. The fact that $*$~is an open and dense condition is obvious, because the union of the local stable leaves of the repellers of $f_p$ has codimension at least one as a subset of $T$.

Now we give a formal proof of the lemma using the notations above. It is enough to prove that for any $F_0 \in \ms$ assumption~\ref{norep} holds on an open and dense subset of some small neighborhood $U \subset \P$ of $F_0$. Take the neighborhood $U$ so small that $a_i(F_0)$ can be continued on $U$ and no new attractors of $f_p$ appear. Now we fix $i$ and check that assumption~\ref{norep} is satisfied for this $i$ on an open and dense subset of $U$.

We assume that $A^{-1}(z_B) \notin \ws_1(p, A)$, replacing $z_B$ by $A^{-l}(z_B)$ for some $l$ if necessary.
Now for any $F \in U$ we may consider the point $z$ as above, and state condition~$*$.

Openness of condition~$*$ is obvious. To prove its density, we perturb the fiber map $f_c$, where $c=A^{-1}(z_B)$, in such way that $f_d$ is fixed
\begin{itemize}
\item
for any point $d \in \pi_B(T)$,
\item
for $d$ close to $p$,
\item
for $d=A^{-k}(z_B), k \ge 2$.
\end{itemize}

Since $c \notin \ws_1(p, A)$, such perturbation preserves $W^s(r_j) \cap T$ for all $j$ by the first condition. The second condition means that a small piece of  $W^u(r_i)$ around $r_i$ is preserved. Since the whole $W^u(r_i)$ is obtained by iterating forward this piece, the third condition implies that the point $F^{-1}(z) \in W^u(r_i)$ is also preserved. The fiber coordinate of $z$ is $f_c(t)$, where $t$ is the fiber coordinate of $F^{-1}(z)$. Thus we can move the point $z$ away from the stable manifolds of the repellers by perturbing the fiber map $f_c$.
\ep

And now we can prove the required statement.
\begin{lem} \label{toa}
If assumption~\ref{norep} holds, then for any point $x \in Sat$ there is a number $j$ such that $a_j \in \oms(x)$.
\end{lem}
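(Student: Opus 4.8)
\emph{The plan} is to show that $\oms(x)$ must contain a point of the fiber $\ss_p$, and then to trace the forward $f_p$-orbit of such a point until it reaches an attractor, invoking assumption~\ref{norep} precisely to handle the case in which that point lies on a repelling orbit.

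First I would record that $\oms(x)$ is nonempty, closed, and forward $F$-invariant; forward invariance is immediate from the definition, since shifting the time index by one does not change the $\limsup$ of the Cesàro averages. Fix any $w \in \oms(x)$. Since $x \in Sat$, Lemma~\ref{sat} gives $\wu(w) \subset \oms(x)$, hence $\overline{\wu(w)} \subset \oms(x)$. Now $\pi_B$ maps $\wu(w)$ onto the whole unstable leaf of $A$ through $\pi_B(w)$ (the unstable leaves of $F$ are graphs over those of $A$), and that leaf is dense in $B$ because $A$ is transitive; as $\pi_B$ is a closed map, $\pi_B(\overline{\wu(w)})$ is a closed set containing a dense set, so it equals $B$. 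In particular $p \in \pi_B(\oms(x))$, i.e. there is $q \in \oms(x) \cap \ss_p$.

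Next, since $p$ is fixed by $A$, the fiber $\ss_p$ is $F$-invariant with $F|_{\ss_p} = f_p$, so $\{\,f_p^n(q) : n \ge 0\,\} \subset \oms(x)$. If $q$ is not a repeller of the Morse-Smale map $f_p$, then $\omega_{f_p}(q)$ is an attracting periodic orbit, so $f_p^{n_l}(q) \to a_j$ along a subsequence; closedness of $\oms(x)$ gives $a_j \in \oms(x)$, as wanted. If instead $q$ is a repeller $r_i$, I would apply assumption~\ref{norep}: it provides $z \in \wu(r_i)$ and $y \in \ss_p \cap \ws(z)$ with $y$ not a repeller. Saturation gives $\wu(r_i) \subset \oms(x)$, so $z \in \oms(x)$ and thus $F^n(z) \in \oms(x)$ for all $n \ge 0$. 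Since $y$ lies on the strong stable leaf of $z$, $\dist(F^n(y), F^n(z)) \to 0$; since $y \in \ss_p$ is not a repeller, $F^n(y) = f_p^n(y)$ accumulates on an attracting periodic orbit, so $F^{n_l}(y) \to a_j$ along some subsequence, and therefore $F^{n_l}(z) \to a_j$ too. Closedness of $\oms(x)$ again yields $a_j \in \oms(x)$.

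The main obstacle --- indeed the only step with genuine content --- is the first one. It rests on two facts: the measure-theoretic input Lemma~\ref{sat}, that $\oms(x)$ is saturated by the strong unstable leaves of $F$, and the structural fact that such a leaf projects \emph{onto} a full (hence dense) unstable leaf of $A$, which is where partial hyperbolicity and the skew-product form enter. Everything afterward is soft: the elementary dynamics of a Morse-Smale circle diffeomorphism, together with the point --- built into assumption~\ref{norep} --- that the strong stable foliation lets one slide off a repelling orbit onto a non-repelling point of $\ss_p$ without leaving the closed invariant set $\oms(x)$.
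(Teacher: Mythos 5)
Your proof is correct and follows essentially the same route as the paper: use saturation by unstable leaves plus density of the projected unstable leaves of $A$ to find a point of $\oms(x)$ in $\ss_p$, then either follow its forward $f_p$-orbit to an attractor, or (if it is a repeller) use assumption~\ref{norep} and the contraction along the stable leaf to carry an attractor into the closed invariant set $\oms(x)$. The extra details you supply (closedness of $\pi_B$, forward invariance of $\oms(x)$) are fine and only make explicit what the paper leaves implicit.
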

\bp
The projections on the base of the unstable leaves of $F$ are the unstable leaves of $A$ and thus are dense in $B$. Hence if $x \in Sat$, then $\oms(x)$ intersects $\ss_p$. There are two possibilities.

Case 1. The intersection $\oms(x) \cap \ss_p$ containts a point $q$ that is not a repeller of $f_p$. Then for some $j$ the sequence $s_n=F^{n}(q), s_n \in \ss_p$ has $a_j$ as a limit point. Since $\oms(x)$ is invariant and closed, we see that $a_j \in \oms(x)$.

Case 2. The intersection $\oms(x) \cap \ss_p$ containts only the repellers of $f_p$. Let $r_i \in \oms(x)$. Using assumption~\ref{norep}, we get the points $z$ and $y$. Since $x \in Sat$ and $z \in W^u(r_i)$, we have $z \in \oms(x)$. The sequence $s_n=F^{n}(y)$ has some $a_j$ as a limit point. Since $\dist(F^n(y), F^n(z)) \to 0$,  $a_j$ is also a limit point of the sequence $s'_n=F^{n}(z)$. As above, it follows that $a_j \in \oms(x)$.
\ep

\section{$\cu(a_i)$ are Lyapunov stable} \label{s:Wu}
The goal of this section is to prove that the assumption in the name of the section is generic.
First we give the precise statements.
\begin{assu} \label{wust}
The map $f_p$ is Morse-Smale and the set $\cu(a_i)$ is Lyapunov stable for any attractor $a_i$ of $f_p$.
\end{assu}

\begin{lem} \label{Wu}
Skew products satisfying assumption~\ref{wust} form a residual subset of $\P$.
\end{lem}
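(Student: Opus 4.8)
The plan is to follow the semicontinuity strategy of~\cite{MP} (cf. Theorem~\ref{MP}), adapting it to the $C^r$ setting and to skew products. The key soft tool is the classical semicontinuity lemma: if $\Phi$ is a map from a Baire space $\mathcal B$ to the space of compact subsets of $X$ (with the Hausdorff metric), then $\Phi$ is continuous on a residual subset of $\mathcal B$. First I would fix, for a skew product $F \in \ms$ and an attractor $a_i = a_i(F)$ of $f_p$, the set-valued map $\Phi_i(F) = \cu(a_i(F))$. One must first check that this is well-defined and that $a_i(F)$ depends continuously on $F$ near any $F_0 \in \ms$: this is exactly why we pass to the open dense set $\ms$, on which $f_p$ is Morse--Smale, so its attracting periodic points persist and vary continuously (indeed $C^r$-smoothly) under perturbation. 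A small technical point is that there may be finitely many attractors $a_1, \dots, a_k$ of $f_p$ and the labelling is only locally consistent; so I would work in a small neighborhood $U \subset \ms$ of a given $F_0$ on which the number of attractors is constant and each $a_i$ is continued, and prove residuality of assumption~\ref{wust} on $U$ for each fixed $i$ separately, then take a finite intersection and cover $\ms$ by countably many such $U$'s.

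Next I would invoke semicontinuity. The map $F \mapsto \cu(a_i(F))$ is in general only lower semicontinuous is not automatic; what is standard (and what~\cite{MP} uses) is that the closure of the unstable manifold of a hyperbolic periodic point is a \emph{lower}-semicontinuous function of the diffeomorphism — because $\wu(a_i)$ contains, for every $N$, a compact piece $W^u_N$ that varies continuously, so any point of $\cu(a_i(F_0))$ is a limit of points in $\cu(a_i(F))$ for $F$ near $F_0$. Hence $\Phi_i$ is lower semicontinuous, and by the semicontinuity lemma it is continuous on a residual subset $R_i \subset U$. The crux of the argument is then the standard implication: \emph{at a point of continuity of $\Phi_i$, the set $\Phi_i(F) = \cu(a_i(F))$ is Lyapunov stable.} The proof of this implication is the heart of the matter and goes by contradiction: if $\cu(a_i(F))$ were not Lyapunov stable, there would be a neighborhood $U_0$ of it and orbit segments of $F$ starting arbitrarily close to $\cu(a_i(F))$ and leaving $U_0$; since $a_i$ is an attractor of $f_p$ (so $\wu(a_i)$ is genuinely the unstable manifold of the periodic point $a_i$, as noted in the excerpt), one can use a $C^r$-small perturbation supported away from $\cu(a_i(F))$ but near the escaping orbit segment to push an arc of the escaping orbit into $\wu(a_i)$ — more precisely, one creates a transverse homoclinic-type connection or simply drags a piece of $W^u$ along the escaping segment — so that for the perturbed map $G$ the set $\cu(a_i(G))$ strictly contains a point far from $\cu(a_i(F))$, contradicting continuity of $\Phi_i$ at $F$. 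Setting $R = \bigcap_i R_i$ over the finitely many attractors in $U$, and then taking the countable intersection over a countable cover of $\ms$ by such neighborhoods (and noting $\ms$ is open dense in $\P$), yields the residual subset of $\P$ satisfying assumption~\ref{wust}.

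The main obstacle I anticipate is the perturbation step in the contradiction argument: one needs a $C^r$ perturbation of the \emph{skew product} $F$ (not an arbitrary diffeomorphism of $X$) that realizes the required dragging of $W^u(a_i)$ along an escaping orbit segment, while keeping $a_i$ and a controlled piece of its unstable manifold fixed. Because the perturbation must respect the skew-product structure (and preserve partial hyperbolicity and orientation-preservation of the fiber maps, i.e.\ stay inside $\P$), one has less freedom than in~\cite{MP}; the fiber-map perturbation technique already used in the proof of Lemma~\ref{gen} — fixing $f_d$ for $d$ near $p$ and at finitely many backward iterates of a chosen base point, while moving $f_c$ at one well-chosen base point $c$ — is the right mechanism, and the genericity of being able to choose the escaping orbit segment so that its base projection meets an unstable leaf of $A$ transversally (unstable leaves of $A$ being dense, as in Preliminaries) should supply the room needed. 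Verifying that such a localized fiber perturbation indeed changes $\cu(a_i)$ in the Hausdorff metric, and that it can be made $C^r$-small with $C^r$-small inverse (Lemma~\ref{l:inverse}), is the technical core that the actual proof must carry out.
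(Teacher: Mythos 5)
Your soft steps are exactly the paper's: localize near $F_0 \in \ms$ so that the attractors $a_i$ persist, observe that $F \mapsto \cu(a_i(F))$ is lower semicontinuous (compact pieces of the unstable manifold of the hyperbolic periodic point $a_i$ move continuously), apply the semicontinuity lemma to get a residual set of continuity points, and conclude by showing that Lyapunov instability of $\cu(a_i)$ forces discontinuity. The gap is in that last implication, which you yourself flag as the ``technical core'': your proposed mechanism --- a $C^r$-small perturbation that ``drags a piece of $W^u$ along the escaping segment'' or ``creates a transverse homoclinic-type connection'' so that $\cu(a_i(G))$ reaches the far-away point --- is precisely what the connecting lemma delivers, and the connecting lemma is a $C^1$ tool; it is not available here because $r \ge 2$ is forced by Lemma~\ref{sat}. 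Moreover, the analogy with the perturbation in Lemma~\ref{gen} does not carry the load: there one only needs to move a single point \emph{off} a codimension-one union of stable leaves (an open-dense condition), whereas here one must land a specified orbit point \emph{exactly on} the perturbed unstable manifold, an equality-type condition that no amount of ``room'' from transversality gives you for free. As written, the step that would fail is the claim that a localized fiber perturbation supported near the escaping segment realizes the connection while staying $C^r$-small.

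The paper closes this gap with a different, one-dimensional idea (Lemma~\ref{pert}). First, using local invariant manifolds, the escaping point $x$ near $v \in \wu(a)$ is replaced by $y = W^{cu}_{loc}(v) \cap W^s_{loc}(x)$, which lies on the same fiber as a point $z \in \wu(a)$ and still escapes (its forward orbit shadows that of $x$). Then one considers the explicit one-parameter family $F_b$ obtained by composing every fiber map with the rigid rotation $R_b$, lifts the dynamics over $\wu_A(p)$ to $\wu_A(p) \times \RR$, and uses monotonicity of orientation-preserving circle maps: increasing $b$ by $\eps$ pushes the graph of $W^u_{\hat F_b}(\hat a_b)$ up by at least $\eps$ on every fiber, while the relevant lift of the escaping point moves down. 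The intermediate value theorem then produces $b \in [-\eps,\eps]$ for which $F^n(y) \in W^u_{F_b}(a_b)$ exactly, giving the discontinuity. So your outline is structurally correct but the decisive perturbation step needs to be replaced by this monotonicity-plus-IVT argument (or something equally exact); this is also where the circle fiber and orientation preservation are genuinely used.
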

\begin{proof}
Note that $\wu(a_i)$ coincides with the unstable manifold of the periodic point $a_i$, so Lemma~\ref{Wu} is very similar to Theorem~\ref{MP}. The difference is that $C^1$ is replaced by $C^r$, the statement is a little weaker and is claimed only for skew products with circle fiber. This lemma is proved in four steps. The first three steps exactly mimic the proof of Theorem~\ref{MP}, so we will just sketch them.

Step 1. Lemma~\ref{Wu} is reduced to a local version, stating that for any $F \in \ms$ there is a small neighborhood $U \ni F$, such that diffeomorphisms satisfying assumption~\ref{wust} form a residual subset of $U$. Since $f_p$ is Morse-Smale, the points $a_i$ survive in $U$ for $U$ small enough, and no new $a_i$ appear. So for any $i$ we can consider a set-valued function $\cu(a_i):~U~\to~\mathcal{K}(X)$, where $\mathcal{K}(X)$ is the set of all compact subsets of $X$, endowed with Hausdorff metric, and $a_i$ is the continuation of the periodic saddle $a_i(F)$.

Step 2. Note that these functions are lower-semicontinuous (see~\cite[$\S2.5$]{ABC} for the definition of semicontinuity). This follows from the fact that any compact part of $\wu(a_i)$ continuously depends on the map (recall that $\wu(a_i)$ coincides with the unstable manifold of periodic point $a_i$).

Step 3. A standard result in general topology (the semicontinuity lemma, see~\cite[$\S2.5$]{ABC}) states that continuity points of a lower-semicontinuous function (with values in the set of closed subsets of a manifold) form a residual subset.  Thus, $\cu(a_i)$ depend on the map continuously on a residual subset of $U$. This holds in $C^r$ for any $r \ge 1$.

Step 4. We show that if $\cu(a_i)$ is Lyapunov unstable, it depends on the map discontinuously. This is done in the proof of Theorem~\ref{MP} for $r=1$ using the connecting lemma. However, we need $r>1$ in order to use Lemma~\ref{sat}. Lemma~\ref{pert} below makes the last step for skew products with one-dimensional fiber for any $r$, using a monotonicity argument instead of the connecting lemma. Thus, Lemma~\ref{Wu} has been reduced to Lemma~\ref{pert}.
\end{proof}

\begin{lem} \label{pert}
Let $F \in \ms$. Assume that for some attractor $a$ of $f_p$ the set $\cu(a)$ is Lyapunov unstable. Then this set depends discontinuously on the skew product at the point $F$.
\end{lem}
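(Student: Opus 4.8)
The plan is to show that Lyapunov instability of $\cu(a)$ forces the Hausdorff-limit of $\cu(a(G))$ over nearby skew products $G$ to be strictly larger than $\cu(a)$ itself, which is precisely discontinuity at $F$ (lower semicontinuity is already known from Step 2, so only upper semicontinuity can fail). Concretely, suppose $\cu(a)$ is not Lyapunov stable. Then there is a neighborhood $U_0 \supset \cu(a)$ such that points starting arbitrarily close to $\cu(a)$ eventually leave $U_0$; pick a point $w$ just outside $\overline{U_0}$ and a point $v \in \cu(a)$ such that $v \arr w$ in the sense defined in the notation section (a trajectory segment of $F$ travels from a small neighborhood of $v$ to a small neighborhood of $w$). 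I would first upgrade this to a statement about genuine leaves: since $\cu(a)=\overline{\wu(a)}$ and $\wu(a)$ is dense in $\cu(a)$, and since $\wu(a)$ is the unstable manifold of the periodic point $a$, we may take $v \in \wu(a)$, so there is an honest finite orbit segment $F^N(v') \approx w$ with $v'$ on $\wu(a)$ close to $v$.

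The core of the argument is the \emph{monotonicity} perturbation replacing the connecting lemma. Because the fiber is the circle $\ss$ and all fiber maps preserve orientation, the fiber maps $f_x \in \Diff_+^r(\ss)$ are totally ordered pointwise in a natural sense, and I can perturb $F$ inside $\P$ to a nearby $G$ by pushing the relevant fiber maps monotonically in one direction along the circle. The goal is to arrange that the perturbed unstable manifold $\wu(a(G))$ — equivalently, the forward iterate under $G$ of a small fixed disk in $\wu(a(G))$ around $a(G)$, which survives the perturbation because $a$ is a hyperbolic attractor of $f_p$ and the perturbation can be made to fix $f_p$ near $a$ — actually reaches the neighborhood of $w$. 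I would choose the perturbation supported on fibers over base points along the relevant stretch of the orbit segment of $v'$, away from $\ss_p$ and away from the part of $\wu(a)$ near $a$ (so that $\cu(a(G))$ still contains a set $C^r$-close to $\cu(a)$, and the periodic data is preserved), and use monotonicity in the fiber to slide the image of the disk so that it passes through $w$: since $v' \arr w$, some iterate already comes uniformly close to $w$, and a monotone nudge of the intermediate fiber maps closes the gap. Then $w \in \cu(a(G))$ while $\dist(w, \cu(a)) > \delta > 0$ uniformly, so $\cu(a(G)) \not\to \cu(a)$ as $G \to F$, i.e. $\cu(a)$ depends discontinuously on the skew product at $F$.

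The main obstacle I expect is Step of the perturbation bookkeeping: making the monotone fiber-push genuinely land the orbit \emph{on} $\wu(a(G))$ rather than merely near it, while simultaneously (a) keeping the perturbation inside $\P$ — so partial hyperbolicity in the narrow sense and orientation-preservation of all fiber maps must be preserved, which is fine for a $C^r$-small push but must be checked; (b) keeping $f_p$ Morse-Smale with the attractor $a$ and its local unstable disk intact; and (c) ensuring the base orbit segment of $v'$ visits distinct fibers (no self-overlap), so that the perturbations on different fibers along the segment do not interfere — this is the role played by the "$d = A^{-k}(z_B)$, $k \ge 2$" type conditions in the proof of Lemma~\ref{gen}, and an analogous shadowing/genericity reduction should handle it here. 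One clean way to organize (c): replace $v'$ by a forward iterate so that the relevant orbit segment meets each fiber of $B$ at most once, then perform the push one fiber at a time. The monotonicity of circle diffeomorphisms is exactly what guarantees that such a one-sided push moves the image of the unstable disk \emph{toward} $w$ monotonically and hence can be calibrated to hit it.
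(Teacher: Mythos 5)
There is a genuine gap, and it sits exactly where you say you expect trouble. First, your ``upgrade to genuine leaves'' is false as stated: you cannot take the run-away initial point $v'$ on $\wu(a)$. The set $\cu(a)$ is closed and invariant, so the forward orbit of any point of $\wu(a)$ stays in $\cu(a)\subset U_0$ forever; Lyapunov instability only supplies points \emph{near} $\cu(a)$ but \emph{off} it whose orbits escape, and in particular no orbit segment starting on $\wu(a)$ can end near your $w$. Thus the object you must control is an escaping orbit that starts close to, but not on, the unstable manifold, and bridging precisely that gap is the entire content of the lemma. Second, the sentence ``a monotone nudge of the intermediate fiber maps closes the gap \dots and hence can be calibrated to hit it'' is the statement to be proved, not an argument: a $C^r$-small push moves $W^u_{G}(a(G))$ only slightly, while $w$ is at a fixed positive distance from $\cu(a)$, so you need a mechanism that produces an \emph{exact} incidence between the perturbed unstable manifold and a point of the escaping orbit, and your proposal does not provide one. (This is the role the connecting lemma plays in $C^1$, which is unavailable here.)

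The paper closes this with two ideas absent from your proposal. (1) A preparatory step using the local product structure: given the run-away point $x$ near $v\in\wu(a)$ with $\dist(F^n(x),\cu(a))>c$, replace $x$ by $y=W^{cu}_{loc}(v)\cap W^s_{loc}(x)$, which lies on the \emph{same central fiber} as $z=W^u_{loc}(v)\cap W^c_{loc}(y)\in\wu(a)$ and still satisfies $\dist(F^n(y),\cu(a))>c/2$ because $y$ and $x$ are joined by a short stable arc. (2) A one-parameter family $F_b$ obtained by composing \emph{all} fiber maps with the rigid rotation by $b$, and an intermediate value theorem in $b$: lifting the dynamics over $W^u_A(p)$ to $W^u_A(p)\times\RR$, increasing $b$ by $\eps$ raises the graph of $W^u_{F_b}(a_b)$ over every base point by at least $\eps$ (the monotone increase near $a_b$ propagates under forward iteration), while the pulled-back target $\hat F_b^{-n}(\hat F^n(y))$ moves the opposite way; since at $b=0$ the two relevant points are $\eps$-close on one and the same $\RR$-fiber, some $b\in[0,\eps]$ gives $F^n(y)\in W^u_{F_b}(a_b)$ exactly. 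Your scheme of pushes supported on selected fibers along the orbit segment (modelled on the proof of Lemma~\ref{gen}) has neither feature: your two points to be merged do not lie on a common fiber, so there is no one-dimensional intermediate-value comparison, and with localized supports you also lose the uniform ``graph rises by at least $\eps$'' estimate. Note also that the global rotation family automatically stays in $\P$ and fixes the issues you list under (a)--(c), so that bookkeeping is not where the difficulty lies.
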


\begin{wrapfigure}{l}[-2mm]{50mm}
\vspace{-10mm}
\begin{center}
\begin{tikzpicture}[scale=0.7]
\draw [thin, gray] (0,4) -- (5,6.5) -- (5,12.5) -- (0,10) -- cycle;
\begin{scope}[shift={(0.2, 12)}, scale=1]
\draw (0, 0) -- (2, 1);
\node[above right] at (2, 1){$u$};
\draw (0, 0) -- (2.3, 0);
\node[right] at (2.3, 0){$s$};
\draw (0, 0) -- (0, 1.2);
\node[above] at (0, 1.2){$c$};
\end{scope}
\draw (2, 4) -- (2, 11);
\node [right] at (2, 4.4) {$W^c(y)$};
\node [right] at (4.85, 12){$W^{cu}(v)$};
\begin{scope}[shift={(0, -3)}, scale=1]
\draw [thick] (0, 8) -- (6, 8+6/2);
\node [below right] at (5, 10.7){$W^u(a)$};
\draw [fill] (2, 9) circle[radius=0.08];
\node [below right] at (2, 9) {$z$};
\draw (2.6, 9.05) -- (2.4, 9.2) -- (2.5, 9.5);
\draw (3.2, 9.83) -- (3.4, 9.7) -- (3.3, 9.4);
\draw [fill] (4, 10) circle[radius=0.08];
\node [below right] at (4, 10) {$v$};
\end{scope}
\begin{scope}[shift={(0, 4)}, scale=1]
\draw [fill] (2, 6) circle[radius=0.08];
\node [below right] at (2, 6) {$y$};
\node [below right] at (2.8, 5.9) {$W^s(x)$};
\draw (2, 6) -- (5.5, 6);
\draw (3.2, 6.2) -- (3.4, 6) -- (3.2, 5.8);
\draw (4.5, 6.2) -- (4.3, 6) -- (4.5, 5.8);
\draw [fill] (5.5, 6) circle[radius=0.08];
\node [below right] at (5.5, 6) {$x$};
\end{scope}
\end{tikzpicture}
\end{center}
\vspace{-15mm}
\end{wrapfigure}

\noindent\textit{Proof.}
Since $\cu(a)$ is Lyapunov unstable, there is a real number $c>0$ such that there is a point $x \in X$ arbitrary close to $\cu(a)$ that runs $c$-away from this set:
\begin{itemize}
\item there are two points $x$ and $v$ arbitrary close, $v \in W^u(a)$,
\item $\dist(F^n(x), \cu(a)) > c$ for some $n \in \mathbb N$.
\end{itemize}

Let us take a new run-away point $y=W^{cu}_{loc}(v) \cap W^s_{loc}(x)$ (cf. the picture). It is on the same fiber as the point $z=W^u_{loc}(v) \cap W^c_{loc}(y), z \in W^u(a)$.

We may assume that $x$ and $v$ are very close. Then the local leaves in consideration are small and almost straight, and $n$ is large. Then $y$ is close to $z$. Since $x$ and $y$ are connected by a small arc of a stable leaf and $n$ is large, $\dist(F^n(y), F^n(x))~<~c/2.$

Thus we obtained two points $y$ and $z$ arbitrary close on the same fiber with
$$z \in W^u(a), \dist(F^n(y), \cu(a)) > c/2.$$

Now we use this pair of points to construct a skew product $\tilde F$ such that
\begin{itemize}
\item
$\tilde F$ is $\dist(y, z)$-close to $F$ in $C^r$ topology
\item
$F^n(y) \in W^u_{\tilde F}(\tilde a)$, where $\tilde a$ is the continuation of $a$.
\end{itemize}
Since $F^n(y)$ is at least $c/2$-away from $\cu(a)$, so is $\overline{W^u_{\tilde F}}(\tilde a)$. Thus existence of such $\tilde F$ implies that $\cu(a)$ depends discontinuously on the skew product.

Given $f:\ss \mapsto \ss$, define $f+b=R_b \circ f$, where $R_b$ is the rigid rotation by angle $2 \pi b$. Consider a family of skew products $F_b$ obtained by adding $b$ to all fiberwise maps of $F$:
\[
F_b: X \to X \quad (x, y) \mapsto (A(x), (f_x+b)(y)) \quad b \ge 0.
\]

Let us prove by the intermediate value theorem that for some $b \in [-\eps, \eps]$ we have $F^n(y) \in W^u_{F_b}(a_b)$, where $a_b$ is the continuation of $a$. This will give the map $\tilde F$ we seek.

We lift the restriction of $F_b$ on $\wu_A(p) \times \ss$ to a continuous map $$\hat F_b: \wu_A(p) \times \RR \to \wu_A(p) \times \RR$$
 in such way that $\hat F_b$ continuously depends on $b$. Set $\hat F=\hat F_0$. Let us also lift the points $a_b, z, y$ to the points $\hat a_b, \hat z, \hat y$ in such way that $\hat z \in \wu_{\hat F}(\hat a_0), \dist(\hat z, \hat y)=\dist(z, y)$.

Denote by $\hat z_b$ the unique point of $W^u_{\hat F_b}(\hat a_b)$ on the fiber of $\hat z$. What we need to prove is that for some $b$ the points $\hat z_b$ and $\hat y_b=\hat F_b^{-n}(\hat F^n(y))$ coincide. They lie on the same fiber $\mathbb R$ for any $b$. For $b=0$ they are $\eps$-close, assume WLOG that $\hat z_0<\hat y_0<\hat z_0+\eps$.

Now let us replace $b=0$ by $b=\eps$. The set $\wu_{\hat F_b}(\hat a_b)$ is as a plot of a function from $W^u_A(p) \subset B$ to the fiber $\mathbb R$. After the change of $b$ the point $\hat a_b$ will stay on its center fiber and move in the positive direction. Thus the value of this function will increase at the points corresponding to a small piece of $\wu_{\hat F_b}(\hat a_b)$ around $a_b$ . Since the whole $\wu_{\hat F_b}(\hat a_b)$ is obtained by iterating forward this small piece, and the fiberwise maps are increased by $\eps$, the value of this function will increase by at least $\eps$ at any point. Thus $\hat z_\eps > \hat z_0+\eps$. The point $\hat y_b$ will move in the negative direction ($\hat y_\eps < \hat y_0$), since when we increase the fiberwise maps, their inverse maps decrease. So for $b=\eps$ we have $\hat y_\eps < \hat z_\eps$. To finish the proof, we use the intermediate value theorem.
\qed

\section{Proof of Theorem A}
Let us start by defining the residual subset $R \subset \P$ on which the Theorem~A holds. The set $R$ is formed by all skew products $F \in \P$ such that
\begin{itemize}
\item \label{cond1}
$F \in \ms$, and for any attractor $a_i$ of the map $f_p$ the set $\cu(a_i)$ is Lyapunov stable,
\item \label{cond2}
for any $x \in Sat$ there is a number $i$, such that $a_i \in \oms(x)$,
\item
these two properties also hold for the inverse skew product $F^{-1}$.
\end{itemize}

The first two properties give a residual subset by sections~\ref{s:toa} and~\ref{s:Wu}.
Since $F \mapsto F^{-1}$ is a homeomorphism between $\P(A)$ and $\P(A^{-1})$ by Lemma~\ref{l:inverse}, the third condition also defines a residual subset.

Let us prove that the attractor is Lyapunov stable.

\begin{lem}
\label{ast}
For any $F \in R$
\begin{enumerate}
\item \label{i2}
for any $x \in Sat$ there is a number $i$, such that $\oms(x)=\cu(a_i)$,
\item \label{i3}
denote by $I$ the set of numbers $i$ such that $\leb(\{x: \oms(x)=\cu(a_i)\})>0$. Then
$$\As=\cup_{i \in I} \cu(a_i),$$
\item \label{i35}
for any $i \in I$ the set $\cu(a_i)$ is weakly transitive, i.e. for any $y, z \in \cu(a_i)$ we have $y \arr z$,
\item \label{i4}
$\As$ is Lyapunov stable.
\end{enumerate}
\end{lem}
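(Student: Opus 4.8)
The plan is to obtain the four claims one after another, each as a short consequence of the preliminary results of the previous sections and of the defining properties of $R$.

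First I would prove~\ref{i2}. Fix $x\in Sat$. The second defining property of $R$ gives an index $i$ with $a_i\in\oms(x)$. Since $x\in Sat$, the set $\oms(x)$ is saturated by unstable leaves, so $\wu(a_i)\subset\oms(x)$, and as $\oms(x)$ is closed this upgrades to $\cu(a_i)\subset\oms(x)$. The first defining property of $R$ (assumption~\ref{wust}) says $\cu(a_i)$ is Lyapunov stable; since $\oms(x)$ meets it, Lemma~\ref{noescape} forces $\oms(x)\subset\cu(a_i)$, hence $\oms(x)=\cu(a_i)$.

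Next I would treat~\ref{i3} and~\ref{i35} together. Recall $\leb(Sat)=1$ and that the Morse--Smale map $f_p$ has only finitely many attractors, so by~\ref{i2} the full-measure set $Sat$ is the union of the finitely many pieces $S_i:=\{x:\oms(x)=\cu(a_i)\}$. Discarding the $S_i$ with $i\notin I$ (a finite union of Lebesgue-null sets, hence null), we get $\oms(x)\subset\bigcup_{i\in I}\cu(a_i)$ for almost every $x$; this union is closed, so minimality of $\As$ yields $\As\subset\bigcup_{i\in I}\cu(a_i)$. Conversely, for $i\in I$ the set $S_i$ has positive measure, so it meets the full-measure set $\{x:\oms(x)\subset\As\}$, and for such an $x$ we get $\cu(a_i)=\oms(x)\subset\As$; this proves the displayed equality of~\ref{i3}. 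For~\ref{i35}, fix $i\in I$ and pick any $x\in S_i$, so that $\oms(x)=\cu(a_i)$. Given $y,z\in\cu(a_i)=\oms(x)$, the forward orbit of $x$ returns to every neighborhood of $y$ and to every neighborhood of $z$ infinitely often, so for arbitrarily small neighborhoods $U\ni y$ and $V\ni z$ some trajectory segment of $x$ runs from $U$ to $V$; this is exactly $y\arr z$ (the same observation as in the proof of Lemma~\ref{noescape}).

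Finally~\ref{i4}: by~\ref{i3}, $\As$ is a finite union of the closed Lyapunov stable sets $\cu(a_i)$, $i\in I$. Given a neighborhood $U$ of $\As$, pick for each $i\in I$ a neighborhood $V_i\subset U$ of $\cu(a_i)$ with $F^n(V_i)\subset U$ for all $n\ge0$; then $V:=\bigcup_{i\in I}V_i$ is a neighborhood of $\As$ contained in $U$, and $F^n(V)=\bigcup_{i\in I}F^n(V_i)\subset U$ for all $n\ge0$, so $\As$ is Lyapunov stable. I do not expect a real obstacle: every step reduces to an earlier lemma or to a routine point-set argument. The one place calling for a little care is the bookkeeping in~\ref{i3} --- checking that the exceptional set $\{x\in Sat:\oms(x)=\cu(a_i)\text{ for some }i\notin I\}$ is genuinely Lebesgue-null (it is, being a finite union of null sets) and that $\bigcup_{i\in I}\cu(a_i)$ is closed (it is, being a finite union), so that the minimality property of $\As$ can be invoked.
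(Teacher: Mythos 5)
Your proof is correct and follows essentially the same route as the paper: item~\ref{i2} via saturation plus Lyapunov stability and Lemma~\ref{noescape}, item~\ref{i3} from $\leb(Sat)=1$, item~\ref{i35} from the recurrence of an orbit with $\oms(x)=\cu(a_i)$, and item~\ref{i4} from stability of the finitely many $\cu(a_i)$. You merely spell out details the paper leaves implicit (the measure bookkeeping and minimality argument in~\ref{i3}, and the finite-union-of-stable-sets argument in~\ref{i4}), which is fine.
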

\emph{Proof.}
\begin{enumerate}
\item Consider any $x \in Sat$. Since $F \in R$ the set $\oms(x)$ contains at least one of the $a_i$. Since $\oms(x)$ is saturated by the unstable leaves, we have $\cu(a_i) \subset \oms(x)$. The set $\cu(a_i)$ is Lyapunov stable, so $\oms(x) = \cu(a_i)$ by Lemma~\ref{noescape}.

\item By Lemma~\ref{sat} $\leb(Sat)=1$. So this follows from item~\ref{i2}.

\item Consider a point $x$ with $\oms(x)=\cu(a_i)$. Since the orbit of $x$ comes arbitrary close to both $y$ and $z$, $y \arr z$.

\item This follows from item~\ref{i3} since the sets $\cu(a_i)$ are Lyapunov stable.
\qed
\end{enumerate}


\begin{rem*}
The sets $\cu(a_i)$ may coincide, include each other, or intersect. However, the sets $\cu(a_i)$ that belong to the attractor either coincide or are disjoint. We do not prove this remark, since it is not used below.
\end{rem*}

Let us prove that the attractor is not thick.

\begin{lem} \label{thin}
For any $F \in R$ either $\leb(\As(F))=0$ or $\oms(x)=X$ for $\leb$-almost any $x \in X$.
\end{lem}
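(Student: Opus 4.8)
The plan is to use the symmetry between $F$ and $F^{-1}$ together with the structure obtained in Lemma~\ref{ast}. Suppose $\leb(\As(F)) > 0$; we must show $\oms(x) = X$ for $\leb$-a.e.\ $x$. By Lemma~\ref{ast}, item~\ref{i3}, we have $\As(F) = \bigcup_{i \in I} \cu(a_i)$, so some $\cu(a_i)$ has positive Lebesgue measure. Since $F^{-1} \in R$ as well, the same analysis applies to the inverse skew product: its statistical attractor $\As(F^{-1})$ is a finite union of sets of the form $\cs(r_j)$ (closures of stable leaves of repellers of $f_p$, which are unstable leaves of periodic points of $F^{-1}$), it is Lyapunov stable for $F^{-1}$, and for $\leb$-a.e.\ $x$ one has $\oms^{F^{-1}}(x) = \cs(r_j)$ for some $j$. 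The set $\As(F)$ is forward-invariant and $\As(F^{-1})$ is backward-invariant (hence forward-invariant), and both have full measure in the sense that a.e.\ point is statistically attracted to them under the respective dynamics. Because $\As(F)$ has positive measure, and a.e.\ point is $\oms^{F^{-1}}$-attracted into $\As(F^{-1})$, the set $\As(F)$ must intersect $\As(F^{-1})$: a positive-measure invariant set cannot have its backward statistical limit sets entirely avoid $\As(F^{-1})$.

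Next I would exploit the intersection point. Pick $w \in \cu(a_i) \cap \cs(r_j)$ with $\leb(\cu(a_i))>0$. The set $\cu(a_i)$ is Lyapunov stable for $F$, so by Lemma~\ref{arr} it contains everything $w$ leads to; in particular it contains $\cu(r_j)$ — wait, more carefully: $\cs(r_j)$ is Lyapunov stable for $F^{-1}$, so $w \arr z$ under $F^{-1}$ for all $z \in \cs(r_j)$ forces $\cs(r_j) \subset$ (the $F$-forward-saturated version), and symmetrically $\cu(a_i) \subset \cs(r_j)$ because $w$ lies in the $F^{-1}$-stable set $\cu(a_i)$ which, being $\leb$-positive and saturated by unstable leaves of $F$, is also saturated by stable leaves of $F^{-1}$. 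The cleanest route: show $\cu(a_i) = \cs(r_j)$. Indeed $\cu(a_i)$ is Lyapunov stable for $F$, so any point in it has its whole $F$-forward limit inside it; $\cs(r_j)$ is Lyapunov stable for $F^{-1}$, so any point in it has its whole $F$-backward limit inside it. Since $w$ is in both, and unstable leaves of periodic points are dense enough (projecting to dense unstable leaves of $A$), iterating $w$ forward fills out $\cu(a_i)$ inside $\cs(r_j)$ and iterating backward fills out $\cs(r_j)$ inside $\cu(a_i)$, giving $\cu(a_i) = \cs(r_j) =: Y$.

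Then $Y$ is Lyapunov stable for both $F$ and $F^{-1}$. Now I would argue $Y = X$. First, $Y$ has positive Lebesgue measure. A set that is Lyapunov stable for both $F$ and $F^{-1}$ and has positive measure must be open: Lyapunov stability for $F^{-1}$ means nearby points do not escape under backward iteration, and combined with the unstable-leaf saturation of $\oms$ and the density of unstable leaves of $A$ in the base, the complement cannot accumulate on $Y$ from inside a positive-measure set. More directly: $Y$ is weakly transitive (item~\ref{i35}), i.e.\ $y \arr z$ for all $y, z \in Y$; and by Lemma~\ref{arr} any point reachable from $Y$ stays in $Y$ (for $F$), any point reaching $Y$ is in $Y$ (for $F^{-1}$). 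Since $\leb(Y)>0$, pick a density point; using that $Y \supset \cu(a_i)$ is a union of entire unstable leaves of $F$ whose base-projections are dense in $B$, the set $Y$ meets every fiber, and on each fiber it is a closed forward- and backward-invariant set for the circle dynamics — by the monotonicity/orientation-preserving structure of the fiber maps and Lyapunov stability in both time directions, such a set on each fiber is either finite or all of $\ss$; positive measure forces it to be all of $\ss$ on a positive-measure set of fibers, hence on all fibers by minimality of the Anosov base action on unstable leaves and continuity. Therefore $Y = X$, and since $\As(F) \supseteq Y = X$ we get $\As(F) = X$, and by item~\ref{i2} (or a direct Lyapunov-stability argument) $\oms(x) = X$ for $\leb$-a.e.\ $x$.

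The main obstacle I anticipate is the final step: upgrading "$Y$ is Lyapunov stable for both $F$ and $F^{-1}$ and has positive measure" to "$Y = X$". Showing $Y$ is open (equivalently, that a both-sided Lyapunov stable positive-measure set is clopen) requires care — one must combine the two-sided stability with the hyperbolicity of the base and the one-dimensional fiber structure, probably via a holonomy/fiberwise monotonicity argument, rather than treating it as a soft topological fact. The identification $\cu(a_i) = \cs(r_j)$ from a single intersection point is the other delicate point, needing the density of the relevant leaves and careful bookkeeping of which dynamics each set is stable under.
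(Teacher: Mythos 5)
Your overall architecture is the paper's: intersect $\As(F)$ with $\As(F^{-1})$, upgrade an intersection point to the identity $\cu(a)=\cs(r)=:Y$, note $Y$ is Lyapunov stable for both $F$ and $F^{-1}$, and conclude $Y=X$. The intersection step is essentially right (a.e.\ point of the positive-measure, closed, invariant set $\As(F)$ has its backward statistical limit set inside both $\As(F)$ and $\As(F^{-1})$). But the two steps you yourself flag as delicate are genuinely gapped as written. For the identification $\cu(a_i)=\cs(r_j)$, your mechanism --- ``iterating $w$ forward fills out $\cu(a_i)$ inside $\cs(r_j)$'' plus density of leaves --- does not work: the forward orbit of a single intersection point can simply converge to a periodic orbit and fills out nothing. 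The ingredient you need is already in Lemma~\ref{ast}: weak transitivity (item~\ref{i35}). For $x\in\cs(r)$, weak transitivity of $\cs(r)$ under $F^{-1}$ gives $x\arr c$ under $F^{-1}$, i.e.\ $c\arr x$ under $F$, where $c$ is the intersection point; since $c$ lies in the closed Lyapunov stable set $\cu(a)$, Lemma~\ref{arr} gives $x\in\cu(a)$, so $\cs(r)\subset\cu(a)$, and the reverse inclusion is symmetric. No density of leaves or actual orbits of $w$ is needed.

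The second gap is the step $Y=X$. Your fiberwise argument treats every fiber as invariant (``on each fiber it is a closed forward- and backward-invariant set for the circle dynamics''), which is false except over periodic base points; the asserted dichotomy ``finite or all of $\ss$'' for such fiber sets is unjustified (closed invariant sets of a circle diffeomorphism can, e.g., be arcs with fixed endpoints); and the passage from ``full circle on a positive-measure set of fibers'' to ``all fibers'' via Fubini and ``minimality'' is not an argument. Moreover positive measure of $Y$ itself is not automatic from your setup (the intersecting $\cu(a_i)$ need not be the one of positive measure unless you choose the intersection point inside it), and the paper's proof needs no measure here at all. The paper's route: $Y_p=Y\cap\ss_p$ is nonempty (it contains $a$), closed, invariant and two-sidedly Lyapunov stable for the Morse--Smale map $f_p$ on the genuinely invariant fiber $\ss_p$, hence $Y_p=\ss_p$ (Lemma~\ref{fbst}, proved via basins and Lemma~\ref{arr}); $Y$ is saturated by unstable leaves of both $F$ and $F^{-1}$ (Lemma~\ref{stsat} applied to both), i.e.\ by stable and unstable leaves; and any point of $X$ is joined to $\ss_p$ by lifting a path of stable/unstable leaves of $A$ in the base, so $Y=X$. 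Finally, your closing inference that $\oms(x)=X$ a.e.\ is compressed: from $X=\cu(a)$ and item~\ref{i35} one gets $y\arr z$ for all $y,z$, hence by Lemma~\ref{arr} every $\cu(a_i)$ equals $X$, and only then item~\ref{i2} gives $\oms(x)=X$ on $Sat$.
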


\begin{proof}
Assume that $\leb(\As(F))>0$. Since the attractor is closed and invariant, it contains statistical $\alpha$-limit sets (i.e. statistical $\omega$-limit sets under $F^{-1}$) of all its points. Since these points form a set of positive measure, $\As(F)$ intersects with $\As(F^{-1})$. Applying item~\ref{i3} of Lemma~\ref{ast} to both $F$ and $F^{-1}$, we see that for some attractor $a$ and repeller $r$ (this $r$ overrides the $r$ introduced in the notation section for the duration of this proof) of the map $f_p$ the sets $\cu(a)$ and $\cs(r)$ intersect. Denote any intersection point of this sets by $c$. Recall that by item~\ref{i3} of Lemma~\ref{ast} the set $\cu(a)$ is Lyapunov stable, while $\cs(r)$ is Lyapunov stable for $F^{-1}$.

Let us prove that $\cs(r) \subset \cu(a)$. Consider any $x \in \cs(r)$. Applying item~\ref{i35} of Lemma~\ref{ast} to $F^{-1}$, we see that $c \arr x$. Since $c \in \cu(a)$, Lemma~\ref{arr} implies that $x \in \cu(a)$. Replacing $F$ by $F^{-1}$ in the previous argument, we get $\cu(a) \subset \cs(r)$, so these two sets coincide. Denote $Y=\cu(a)=\cs(r)$.

The set $Y$ is forward and backwards Lyapunov stable, non-empty, closed, and invariant. So is the set $Y_p := Y \cap \ss_p$ (under the action of the Morse-Smale map $f_p$). Lemma~\ref{fbst} below states that any such set is equal to the whole circle: $Y_p=\ss_p$. Lemma~\ref{stsat} below states that any Lyapunov stable invariant set is saturated by unstable leaves. Hence, $Y$ is saturated by both stable and unstable leaves. We claim that any point $x \in X$ can be connected with a point of $\ss_p$ by a path formed by stable and unstable leaves. This implies that $Y=X$. To prove the claim, connect the points $\pi_B x$ and $p$ by a path formed by stable and unstable leaves of $A$ in the base.  Since (un)stable leaves of $F$ project to (un)stable leaves of $A$ by $\pi_B$, this path can be lifted to a path connecting $x$ with some point of $\ss_p$, formed by stable and unstable leaves of $F$.

Since $X=Y=\cu(a)$, item~\ref{i35} of Lemma~\ref{ast} means that for any $y, z \in X$ we have $y \arr z$. Then Lemma~\ref{arr} implies that for any $i$ we have $\cu(a_i)=X$. By item~\ref{i2} of Lemma~\ref{ast} for any $p \in Sat$ we have $\oms(p)=X$. Since $\leb(Sat)=1$, a $\leb$-generic point has a dense orbit.
\end{proof}

\begin{lem} \label{fbst}
Let $g$ be a Morse-Smale circle diffeomorphism of $\ss$, let $Y \subset \ss$ be forward and backward Lyapunov stable, non-empty, closed, and invariant. Then $Y=\ss$.
\end{lem}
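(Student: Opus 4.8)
The plan is to analyze the structure of a Morse–Smale circle diffeomorphism $g$ directly. Recall that a Morse–Smale $g \in \Diff_+^r(\ss)$ has finitely many periodic points, alternating attractors $a_1, \dots, a_k$ and repellers $r_1, \dots, r_k$ (all of the same period, say $m$, since $g$ preserves orientation), and every non-periodic point is forward asymptotic to some $a_i$ and backward asymptotic to some $r_j$. I would first dispose of the case $k = 0$: then $g$ has no periodic points at all, but an orientation-preserving circle diffeomorphism with no periodic points has irrational rotation number and hence every orbit is dense in $\ss$ (or the map is conjugate to an irrational rotation); in that case a non-empty closed invariant set must be all of $\ss$, and we are done. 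So assume $k \ge 1$.

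Now suppose $k \ge 1$ and let $Y$ be non-empty, closed, invariant, and Lyapunov stable for both $g$ and $g^{-1}$. First I claim $Y$ contains an attractor $a_i$: pick any $x \in Y$; its forward orbit accumulates on some $a_i$, and since $Y$ is closed and invariant, $a_i \in Y$. By forward Lyapunov stability of $Y$ applied at $a_i$, the whole basin-closure $\cu(a_i)$ — which in the circle case is a closed arc with endpoints at the two repellers adjacent to $a_i$ — must lie in $Y$: indeed any point of $\wu(a_i)$ is connected to $a_i$ by a trajectory (it is forward asymptotic, hence $a_i \arr x$ in the sense of the paper's notation, or more directly: a neighborhood of $a_i$ is mapped into any neighborhood of $Y$, and iterating, we reach every point of the basin), so Lemma~\ref{arr} gives $\wu(a_i) \subset Y$, and $Y$ closed gives $\cu(a_i) \subset Y$. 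Symmetrically, applying backward Lyapunov stability: $Y$ must also contain a repeller $r_j$ (the backward orbit of any point accumulates on a repeller), and then $\cs(r_j) \subset Y$, where $\cs(r_j)$ is the closed arc between the two attractors adjacent to $r_j$.

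The key combinatorial step is then to propagate: since $Y$ contains a closed arc $\cu(a_i)$ whose endpoints are repellers, and $Y$ contains, for each repeller $r_j \in Y$, the arc $\cs(r_j)$ spanning to the neighboring attractors, and $Y$ contains $\cu$ of each attractor it contains — alternating these two inclusions walks us around the circle one fundamental arc at a time. Concretely: $\cu(a_i) \subset Y$ forces the two repellers bounding it into $Y$; for each such repeller $r$, $\cs(r) \subset Y$ forces the next attractor (on either side) into $Y$; for that attractor, $\cu \subset Y$ forces the next repeller in; and so on. Since there are only finitely many periodic points arranged cyclically, after finitely many steps every periodic point of $g$ lies in $Y$, and the union of the arcs $\cu(a_i)$ over all $i$ is all of $\ss$. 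Hence $Y = \ss$. The main obstacle here is purely bookkeeping — making the "walk around the circle" rigorous with the correct description of $\cu(a_i)$ and $\cs(r_j)$ as arcs and checking the endpoints match up — but there is no analytic difficulty, since everything reduces to the finite cyclic structure of periodic orbits of a Morse–Smale circle map together with two applications (forward and backward) of Lyapunov stability via Lemma~\ref{arr}.
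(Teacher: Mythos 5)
Your strategy is essentially the paper's: use Lemma~\ref{arr} together with the two stability hypotheses to show that once $Y$ contains a periodic point of $g$ it must swallow the whole closed arc of the corresponding basin, and then use the finite cyclic arrangement of the periodic points to exhaust $\ss$ (the paper phrases the last step as ``$Y$ is open, closed and non-empty'', you phrase it as a walk around the circle; that difference is immaterial). However, in the key absorption step you have the time directions reversed, and as written the justification is false. For $x$ in the basin of attraction of $a_i$ one has $x \arr a_i$, not $a_i \arr x$: forward iterates of a small neighborhood of the attracting periodic point $a_i$ contract toward its orbit, so they never reach a general point of the basin, contrary to your parenthetical ``a neighborhood of $a_i$ is mapped into any neighborhood of $Y$, and iterating, we reach every point of the basin''. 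What actually pulls the basin of $a_i$ into $Y$ is \emph{backward} Lyapunov stability: under $g^{-1}$ one does have $a_i \arr x$, so Lemma~\ref{arr} applied to $g^{-1}$ gives $x \in Y$. Symmetrically, the closed arc you denote $\cs(r_j)$ (the closure of the basin of repulsion) is forced into $Y$ by \emph{forward} stability, not backward as you state; this is exactly how the paper splits the two cases. Since both stabilities are among the hypotheses, this is a repairable mislabeling rather than a missing idea, but the proof as written does not go through.

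Two smaller points. The set you call $\cu(a_i)$ is not the unstable set of $a_i$ for the one-dimensional map $g$ (that is just the orbit of $a_i$, since $a_i$ is attracting); what your argument really uses is the closure of the basin of attraction, i.e.\ the closed arc bounded by the adjacent repellers, so the appeal to ``$\wu(a_i) \subset Y$'' should be rewritten in those terms. Also, ``pick any $x \in Y$; its forward orbit accumulates on some $a_i$'' fails when $x$ is a repelling periodic point, but then $x$ itself is a periodic point lying in $Y$ and your walk can start there; and the case $k=0$ never occurs, because the non-wandering set of a Morse--Smale diffeomorphism is a non-empty finite union of hyperbolic periodic orbits, so the digression about irrational rotation number is vacuous.
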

\begin{proof}
Since $\ss$ is the only non-empty open and closed subset of $\ss$, it is enough to prove that $Y$ is open. To do so, take any $y \in Y$. If $y$ is not a repeller of $g$, it belongs to a basin of attraction of some attractor $a$. Since $y \arr a$, by Lemma~\ref{arr} we have $a \in Y$. Take any point $b$ in the basin of attraction of $a$. Since $b \arr a$, by Lemma~\ref{arr} we have $b \in Y$, thus $Y$ contains the whole basin of attraction of $a$, and $y$ is an interior point of $Y$. If $y$ is a repeller, similar argument shows that the whole basin of repulsion of $y$ belongs to $Y$, thus $y$ is an interior point again.
\end{proof}

\begin{lem} \label{stsat}
Let $A \subset X$ be a Lyapunov stable closed invariant set. Then $A$ is saturated by unstable leaves, i.e. $x \in A$ implies $\wu(x) \in A$.
\end{lem}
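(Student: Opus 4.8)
The plan is to use the only two facts available: the defining contraction property of unstable leaves under backward iteration, and the forward-invariant trapping neighborhoods provided by Lyapunov stability. So first I would fix $x \in A$ and an arbitrary $y \in \wu(x)$. By the definition of the (global) unstable leaf, $\dist(F^{-n}(x), F^{-n}(y)) \to 0$ as $n \to +\infty$. Since $A$ is closed and invariant, $F^{-n}(x) \in A$ for every $n \ge 0$, and therefore $\dist(F^{-n}(y), A) \to 0$ as well; i.e. the backward orbit of $y$ accumulates on $A$.

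Next I would feed this into Lyapunov stability. Let $U$ be any neighborhood of $A$. By Lyapunov stability there is a neighborhood $V \subset U$ of $A$ with $F^m(V) \subset U$ for all $m \ge 0$; since $A$ is compact, $V$ contains a uniform $\eps$-neighborhood of $A$ for some $\eps>0$. Because $\dist(F^{-n}(y), A) \to 0$, we may choose $n$ so large that $F^{-n}(y) \in V$. Then
\[
y = F^{n}\bigl(F^{-n}(y)\bigr) \in F^{n}(V) \subset U.
\]
As $U$ was an arbitrary neighborhood of the closed set $A$, it follows that $y \in A$, so $\wu(x) \subset A$.

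I do not expect a genuine obstacle here: the statement is essentially the observation that a Lyapunov stable invariant set swallows anything whose backward orbit gets close to it, and an unstable leaf is exactly a set of points whose backward orbits merge with one lying in $A$. The only point requiring a little care is that Lyapunov stability is a forward-time notion, so one must approach $A$ along the \emph{backward} orbit (which is what the unstable leaf gives) and only then invoke the forward invariance of the trapping neighborhood $V$; this is the mirror image of the standard fact that sets Lyapunov stable for $F^{-1}$ are saturated by stable leaves. One should also make sure to use the global unstable leaf, writing $\wu(x) = \bigcup_{n \ge 0} F^{n}\bigl(\wu_{loc}(F^{-n}(x))\bigr)$ if needed, but the contraction estimate above holds for every point of the global leaf, so nothing changes.
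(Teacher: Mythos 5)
Your proof is correct and follows essentially the same route as the paper: the backward orbit of $y$ shadows the backward orbit of $x$ inside $A$, and then Lyapunov stability (forward invariance of a trapping neighborhood) forces $y$ into every neighborhood of $A$. The paper merely packages this last step through its Lemma~\ref{arr} (taking a limit point $z$ of $F^{-n}(x)$ and noting $z \arr y$), whereas you unpack the definition of stability directly; the argument is the same.
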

\begin{proof}
Let $x \in A$, $y \in \wu(x)$. Take any limit point $z$ of the sequence $F^{-n}(x)$. Since $y \in \wu(x)$, $\dist(F^{-n}(x), F^{-n}(y)) \to 0$, so $z \arr y$. Since $A$ is closed and invariant, $z \in A$. Lemma~\ref{arr} implies that $y \in A$.
\end{proof}

\begin{proof}[Proof of Theorem A]
Take the set $R \subset \P$ defined at the beginning of this section. For any $F \in R$ the set $\As(F)$ is Lyapunov stable by Lemma~\ref{ast} and coincides with $\A(F)$ by Lemma~\ref{amas}. By Lemma~\ref{thin} we see that either $\leb(\As(F))=0$ or $\As(F)=X$.
\end{proof}

\section{Proof of Corollary B}
\begin{lem}\footnote{This lemma is a slightly improved version of the following statement by S. Minkov: if the Milnor attractor has positive measure, the attractor of the inverse diffeomorphism either has positive measure or is Lyapunov unstable.}
\label{nw}
Let $F$ be any diffeomorphism, such that $\A(F)$ is Lyapunov stable and $\leb(\A)=0$. Then $\leb(\nw(F))=0$, where $\nw$ denotes the non-wandering set.
\end{lem}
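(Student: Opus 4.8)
\noindent\textit{Proof strategy.} The plan is to fix a scale $m$, show that $\nw(F)$ is contained in a $1/m$-neighbourhood of $\A$ up to a $\leb$-null set, and then let $m\to\infty$.

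Concretely, put $U_m=\{x:\dist(x,\A)<1/m\}$, so that $\bigcap_m\overline{U_m}=\A$ and hence $\leb(\overline{U_m})\to\leb(\A)=0$. Using Lyapunov stability of $\A$ I would choose an open set $V_m$ with $\A\subset V_m\subset U_m$ and $F^n(V_m)\subset U_m$ for every $n\ge0$, and then set $O_m=\bigcup_{n\ge0}F^n(V_m)$: this set is open, forward invariant ($F(O_m)\subset O_m$) and contained in $U_m$. Let also $D_m=\bigcup_{n\ge0}F^{-n}(V_m)$ be the open set of points whose forward orbit meets $V_m$. Since for $\leb$-a.e.\ $x$ we have $\om(x)\subset\A\subset V_m$, the forward orbit of almost every point enters the open set $V_m$, so $D_m$ has full $\leb$-measure.

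The central step is to prove $\nw(F)\cap D_m\subset\overline{O_m}$. Take $x\in D_m\setminus\overline{O_m}$; I claim $x\notin\nw(F)$. Since $D_m$ is open and $\overline{O_m}$ closed, pick a neighbourhood $N\ni x$ with $\overline N$ compact, $\overline N\subset D_m$ and $\overline N\cap\overline{O_m}=\varnothing$. Exhausting $D_m$ by the increasing open sets $P_K=\bigcup_{j=0}^{K}F^{-j}(V_m)$, compactness gives $\overline N\subset P_K$ for some $K$; thus every point of $\overline N$ enters $V_m\subset O_m$ within $K$ iterates, and forward invariance of $O_m$ yields $F^n(N)\subset O_m$, hence $F^n(N)\cap N=\varnothing$, for all $n\ge K$. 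It remains to observe that a point possessing a neighbourhood with uniformly bounded return times is wandering: if $x\in\nw(F)$ one applies the non-wandering property to a neighbourhood basis $N_i\downarrow\{x\}$ contained in $N$ (so all return times are $\le K$), extracts a constant return time $n^*$ along a subsequence, obtains points $y_i\to x$ with $F^{n^*}(y_i)\to x$, and concludes $F^{n^*}(x)=x$ by continuity; but then every multiple of the period of $x$ is a return time of $N$, contradicting their boundedness. Hence $x\notin\nw(F)$.

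Finally, $\nw(F)\subset(\nw(F)\cap D_m)\cup(X\setminus D_m)\subset\overline{O_m}\cup(X\setminus D_m)$, so $\leb(\nw(F))\le\leb(\overline{O_m})\le\leb(\overline{U_m})$ for every $m$, and letting $m\to\infty$ gives $\leb(\nw(F))=0$. The step I expect to be the real obstacle is the inclusion $\nw(F)\cap D_m\subset\overline{O_m}$: the hypothesis only controls $\om$-limit sets of \emph{individual} orbits, whereas being wandering concerns whole neighbourhoods and the exceptional null set of orbits not approaching $\A$ may well be dense. The device that resolves this is to upgrade Lyapunov stability to an honest open forward-invariant trapping region $O_m$, make the entry time into it uniform over a neighbourhood by a compactness argument, and then exploit the elementary fact that uniformly bounded return times force periodicity — which a genuine non-wandering point sitting just outside the trap cannot afford.
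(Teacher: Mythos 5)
Your proposal is correct, but it follows a genuinely different (more quantitative) route than the paper. The paper's proof is pointwise and exact: it shows that every non-wandering point of the basin $B=\{x:\om(x)\subset \A\}$ in fact lies in $\A$ --- if $x\in B\setminus\A$ were non-wandering, then for $y\in\om(x)\subset\A$ one gets $y\arr x$, and Lemma~\ref{arr} (a closed Lyapunov stable set absorbs any point reachable from it in the sense of $\arr$) forces $x\in\A$, a contradiction; hence $\nw\subset\A\cup(X\setminus B)$, a null set. You instead prove, for each scale $m$, only the approximate inclusion $\nw\cap D_m\subset\overline{O_m}\subset\overline{U_m}$ and let $m\to\infty$, replacing the $\arr$ machinery by an explicit open forward-invariant trapping region $O_m$, a compact-exhaustion argument giving a uniform entry time over a neighbourhood, and the elementary fact that uniformly bounded return times force periodicity. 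What you lose is the sharper structural conclusion (non-wandering basin points lie in $\A$ itself); what you gain is a self-contained argument that in particular makes explicit the step the paper passes over with ``it is easy to see that $y \arr x$'' --- your bounded-return-time/periodicity trick is essentially what is needed to justify that step, since one must produce returns occurring later than the time at which the orbit of $x$ reaches a neighbourhood of $y$. Two minor points: your claim that $D_m$ has full measure uses that $\om(x)\neq\emptyset$ for almost every $x$, which holds here because the phase space is compact; and $V_m$ must indeed be taken open (the interior of the neighbourhood provided by Lyapunov stability), as you did.
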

\begin{proof}
Let $B$ be the basin of attraction of $\A$, i.e. $B=\{ x\in X: \om(x) \sub \A \}$.
We claim that if $\A$ is Lyapunov stable, any point $x \in B \setminus \A$ is wandering. Assume the contrary. Take any $y \in \om(x) \sub \A$. Using that $x$ is non-wandering, it is easy to see that $y \arr x$. By Lemma~\ref{arr} we have $x \in \A$, which is a contradiction.

Now note that $\leb(B)=1$ (by the definition of $\A$), so if $\leb(\A)=0$, then $\leb(B \setminus \A)=1$. Since $\nw$ does not intersect $B \setminus \A$, we have $\leb(\nw)=0$.
\end{proof}

\begin{proof}[Proof of Corollary B]
By Lemma~\ref{thin} for any $F \in R$ either a Lebesgue-generic point has a dense orbit or $\leb(\As)=0$. In the latter case, since $\As$ is equal to $\A$ and Lyapunov stable (by Theorem A), Lemma~\ref{nw} implies that $\leb(\nw(F))=0$.
\end{proof}

\section{What if $A$ has no fixed points?} \label{nofixed}
We started our proof by taking a fixed point $p$ of the Anosov diffeomorphism $A$. If there is no such fixed point, we consider a periodic point instead. The proofs above work in this situation with some minor changes that are hinted below. However, the author is not aware of any examples of Anosov differomorphisms on connected manifolds without a fixed point.
\begin{itemize}
\item The point $p$ is now periodic, not fixed.
\item Instead of the fiber map $f_p$ we need to consider $g_p$, the fiber map of $F^{\per(p)}$ above $p$:
\[g_p=f_{A^{\per(p)-1}(p)} \circ \dots \circ f_p.
\]
\item Recall that the set $\ms$ is formed by skew products, such that the map $g_p$ is Morse-Smale. We need to prove that the set $\ms$ is an open and dense subset of $\P$.
This follows from the fact that Morse-Smale maps form an open and dense subset of $\Diff_+^r(\ss)$. Openness of $\ms$ is obvious. To prove density, note that we can perturb $g_p$ as we want by perturbing the fiber map $f_p$.
\item In the proof of Lemma~\ref{toa} we should consider the map $F^{\per(p)}$ instead of $F$.
\item In the proof of Lemma~\ref{pert} we need to consider the dynamics above the unstable manifold of the whole orbit of the point $p$ in the base, not just $W^u_A(p)$.
\end{itemize}

\section{Acknowledgements}
The author is grateful to professor Yu.S. Ilyashenko for constant attention to this work and to I. Shilin and S. Minkov for useful discussions.

 \end{document}